\documentclass{article}


\usepackage{fullpage}
\usepackage{amssymb,dsfont,color}
\usepackage{amsthm}
\usepackage{amsmath}
\usepackage{mathrsfs}
\usepackage{enumerate}

\newtheorem{thm}{Theorem}

\newtheorem{remark}[thm]{Remark}
\newtheorem{lem}[thm]{Lemma}
\newtheorem{cor}[thm]{Corollary}
\newtheorem{definition}[thm]{Definition}
\newtheorem{example}[thm]{Example}
\bibliographystyle{plain}


\def\ds{\displaystyle}
\def\s{\sigma}
\def\R{\mathbb{R}}
\def\S{\mathcal{S}}
\def\C{\mathcal{C}}
\def\Q{\mathcal{Q}}
\def\s{\sigma}

\definecolor{orange}{rgb}{0.99,0.69,0.07}

\begin{document}

\title{Lyapunov characterization of uniform exponential stability for nonlinear infinite-dimensional systems}

\author{Ihab Haidar\thanks{Quartz EA 7393, ENSEA, Cergy-Pontoise, France, {\tt ihab.haidar@ensea.fr}}, Yacine Chitour\thanks{Laboratoire des Signaux et Syst\`emes (L2S),  Universit\'e Paris-Saclay, CNRS, CentraleSup\'elec,  Universit\'e Paris-Saclay, Gif-sur-Yvette, France, {\tt yacine.chitour@l2s.centralesupelec.fr}}, Paolo Mason\thanks{CNRS \& Laboratoire des Signaux et Syst\`emes (L2S),  Universit\'e Paris-Saclay, CNRS, CentraleSup\'elec, Gif-sur-Yvette, France, {\tt paolo.mason@l2s.centralesupelec.fr}}, and Mario Sigalotti\thanks{Laboratoire Jacques-Louis Lions (LJLL), Inria, Sorbonne Universit\'e, Universit\'e de Paris, CNRS, Paris, France, {\tt mario.sigalotti@inria.fr}}}

\maketitle

\begin{abstract}
In this paper we deal with infinite-dimensional nonlinear forward complete dynamical systems which are subject to external disturbances. We first extend the well-known Datko lemma to the framework of the considered class of systems. Thanks to this generalization, we provide 
characterizations of the uniform (with respect to disturbances) local, semi-global, and global exponential stability, through the existence of coercive and non-coercive Lyapunov functionals. The importance of the obtained results is underlined through some applications concerning 1) exponential stability of nonlinear retarded systems with piecewise constant delays, 2) exponential stability preservation under sampling for semilinear control switching systems, and 3) the link between input-to-state stability and exponential stability of semilinear switching systems. 
\end{abstract}

\emph{Keywords:}
Infinite-dimensional systems, Nonlinear systems, Switching systems, Converse Lyapunov theorems, Exponential stability.


\section{Introduction}
Various works have been recently devoted to the characterization of the stability of infinite-dimensional systems
in Banach spaces  through {\it non-coercive} and {\it coercive} Lyapunov functionals (see, e.g.,~\cite{Haidar-Automatica, Hante2011, 7402277, Mironchenko2019, MIRONCHENKO2018}). 
By non-coercive Lyapunov functional, we mean a positive definite functional decaying along the 
trajectories of the system which satisfies 
\begin{equation*}\label{weak}
0<V(x)\leq \alpha(\|x\|), \quad \forall~x\in X\backslash\{0\},
\end{equation*} 
where $X$ is the ambient Banach space and $\alpha$ belongs to the class ${\cal{K}_{\infty}}$ of 
continuous increasing 
bijections from $\mathbb{R}_+$ to $\mathbb{R}_+$.  Such a function $V$ would be coercive if there existed
$\alpha_0\in{\cal{K}_{\infty}}$ such that $V(x)\geq \alpha_0(\|x\|)$ for every $x\in X$. In~\cite{MIRONCHENKO2018} it has been proved that the existence of a coercive Lyapunov functional $V$ 
represents a necessary and sufficient condition for the global asymptotic stability for a general class of infinite-dimensional forward complete dynamical systems. On the 
other hand, the existence of 
a non-coercive Lyapunov functional does not guarantee
global asymptotic stability and some additional regularity assumption on the dynamics is needed (see, e.g., \cite{Hante2011, MIRONCHENKO2018}). Converse Lyapunov theorems can be helpful for many applications, such as stability analysis of interconnected systems~\cite{HaidarChapter2019} and for the characterization of  input-to-state stability 
(see, e.g., \cite{8618712,PEPE2017,SONTAG1995351}). 
Stability results 
based on
non-coercive Lyapunov functionals may be more easily applied in practice, while the existence of a coercive Lyapunov functional may be exploited to infer
additional information on a stable nonlinear system.

Here, we consider the same class of abstract forward complete dynamical systems, 
 subject to a shift-invariant set of disturbances, as in~\cite{MIRONCHENKO2018}. The novelty of our approach is that we focus on exponential (instead of asymptotic) stability. For the rest of the paper, the word \textit{uniform} will refer to uniformity with respect to disturbances.
We provide theorems characterizing different types of uniform local, semi-global, and global exponential stability, through the existence of non-coercive and  coercive Lyapunov functionals. Using a standard converse Lyapunov approach, we prove that uniform semi-global exponential stability is characterized by the existence of a 1-parameter family of Lyapunov functionals, each of them decaying uniformly on a bounded set, while the union of all such bounded sets is equal to the entire Banach space $X$. 
Concerning the non-coercive case, we first give a generalization of the Datko lemma~\cite{Datko, Pazy1972}. Recall that the latter characterizes the exponential behavior of a linear $C_0$-semigroup in a Banach space 
in terms of a uniform estimate
of the $L^p$-norm of the solutions. This result has been extended in~\cite{Ichikawa} to the framework of nonlinear semigroups. Here, we generalize the Datko lemma to the considered class of infinite-dimensional forward complete dynamical systems. 
Thanks to such a generalization, we prove that the existence of a non-coercive Lyapunov functional is sufficient, under a uniform growth estimate on the solutions of the system,
for the uniform 
exponential stability. The importance of the obtained results is underlined through some applications
as described in the sequel.

Retarded functional differential equations form an interesting class of infinite-dimensional systems that we cover by our approach. Converse Lyapunov theorems have been developed for systems described by retarded and neutral functional differential equations (see, e.g., \cite{Karafyllis2006, Pepe-IJC2013}). Such results have been recently extended in~\cite{Haidar-Automatica} to switching linear retarded systems through coercive and non-coercive Lyapunov characterizations. After representing a nonlinear retarded functional differential equation as an abstract forward complete dynamical system, all the 
characterizations of uniform exponential stability provided in the first part of the paper
can be applied to this particular class of infinite-dimensional systems. In particular, we characterize the uniform global exponential stability of a retarded 
functional differential 
equation
in terms of the existence of a non-coercive Lyapunov functional.

Another interesting problem when dealing with a continuous-time model is the practical implementation of a designed  feedback control. Indeed, in practice, due to numerical and technological limitations (sensors, actuators, and digital interfaces),
a continuous measurement of the output and a continuous implementation of a feedback control are impossible. This means that the implemented input is, for almost every time, different from the designed controller. Several methods have been developed in the literature of ordinary differential equations for sampled-data observer design under discrete-time measurements (see, e.g., \cite{5718046, 5208358, LUCIEN20172941, MAM2015}), and for sampled-data control design guaranteeing a globally stable closed-loop system (see, e.g., \cite{Ackerman, HETEL2017309}). 
Apart from time-delays systems (see, e.g.,~\cite{Fridman2010, Karafyllis2012, PEPE2017295} for sampled-data control and \cite{MAM2015, MAZENC201674} for sampled-data observer design), few results exist for infinite-dimensional systems.  
The difficulties come from the fact that the developed methods do not directly apply to the infinite-dimensional case, for which even the well-posedness of sampled-data control dynamics is not obvious~(see, e.g., \cite{Karafyllis2018} for more details). Some interesting results have been obtained for infinite-dimensional linear systems~\cite{Karafyllis2018, Logemann2003, Wakaiki2019}. In the nonlinear case no standard methods have been developed and the problem is treated case by case~\cite{Koga2019}. Here, we focus on the particular problem of feedback stabilization under sampled output measurements of an abstract semilinear infinite-dimensional system. In particular, we consider 
the dynamics
 \begin{equation}\label{semilinear intro}
\dot x(t)=Ax(t)+f_{\s(t)}(x(t),u(t)), \quad t\geq 0,
\end{equation}
where $x(t)\in X$, $U$ is a Banach space, $u\in U$ is the input, $A$ is the infinitesimal generator of a $C_0$-semigroup of bounded linear operators $(T_t)_{t\geq 0}$ on $X$, 
$\s:\R_+\to \mathcal{Q}$ is a piecewise constant switching function, and $f_{q}:X\times U\to X$
is a 
Lipschitz continuous nonlinear operator, uniformly with respect to  $q\in \mathcal{Q}$. Assume that only discrete output measurements are available    
\begin{equation}
y(t)=x(t_k),\quad \forall~t\in [t_k,t_{k+1}),\; \forall~k\geq 0,
\end{equation}
where $(t_k)_{k\ge 0}$ denotes the increasing sequence of sampling times.
It is well known that, in general, no feedback of the type $u(t)=K(y(t))$ stabilizes system~\eqref{semilinear intro}. 
Moreover, suppose that system~\eqref{semilinear intro} in closed-loop with 
 \begin{equation}\label{feedback intro}
 u(t)=K(x(t)), \quad \forall~t\geq 0,
 \end{equation}
where $K:X\to U$ is a globally Lipschitz function satisfying $K(0)=0$, is uniformly semi-globally exponentially stable. Using our converse Lyapunov theorem, we show that if the  maximal sampling period is small enough
then, under some additional conditions,
system~\eqref{semilinear intro} in closed-loop with the predictor-based sampled-data control  
\begin{equation}\label{predictor intro}
u(t)=T_{t-t_k}y(t_k),\quad \forall~t\in [t_k,t_{k+1}),\;\forall~k\geq 0, 
\end{equation}
is uniformly locally exponentially stable in each ball around the origin. 
Furthermore, if the closed loop system~\eqref{semilinear intro}-\eqref{feedback intro} is uniformly globally exponentially stable, then the same property holds for the closed loop system~\eqref{semilinear intro}-\eqref{predictor intro}, under sufficiently small sampling period. We give an example of a wave equation~(see, e.g., \cite{Chitour19, Martinez2000}) showing the applicability of our result.

In recent years, the problem of characterizing input-to-state stability (ISS) for infinite-dimensional systems has attracted a particular attention. 
Roughly speaking, the ISS property, introduced in~\cite{Sontag} for ordinary differential equations, means that the trajectories of a perturbed system eventually approach a neighborhood of the origin whose size is proportional to the magnitude of the perturbation. This concept has been widely studied in the framework of complex systems such as switching systems (see, e.g., \cite{MANCILLAAGUILAR200147} and references therein), time-delay systems (see, e.g., \cite{PEPE20061006, 701099, Yeganefar} and references therein), and abstract infinite-dimensional systems (see, e.g.,~\cite{MiroPrieur2019, MIRONCHENKO201864}). For example, in~\cite{MIRONCHENKO201864} a converse Lyapunov theorem characterizing the input-to-state stability of a locally Lipschitz dynamics through the existence of a locally Lipschitz continuous coercive ISS Lyapunov functional is given. 
 Recently in~\cite{JMPW19} it has been shown  that, under regularity assumptions on the dynamics, the existence of non-coercive Lyapunov functionals implies input-to-state stability. Here, we provide a result of ISS type, proving that the input-to-state map has finite gain, under the assumption that the unforced system corresponding to~\eqref{semilinear intro} (i.e., with $u\equiv 0)$ is uniformly globally exponentially stable. 
 

The paper is organized as follows. Section~\ref{sec:problem state} presents the problem statement with useful notations and definitions. In Section~\ref{main results sec} we state our main results, namely three Datko-type theorems for uniform local, semi-global, and global exponential stability, together with direct and converse Lyapunov theorems. In Section~\ref{sec: discussion} we compare the proposed Lyapunov theorems with the current state of art.  The applications are given in Section~\ref{sec: applications}. In Section~\ref{sec-example} we consider an example of a damped wave equation. The proofs are postponed to Section~\ref{s:proofs}.

\subsection{Notations}\label{sec:notations}
By $(X,\|\cdot\|)$ we denote a Banach space with norm $\|\cdot\|$ and by $B_{X}(x,r)$ the closed ball in $X$ of center $x\in X$ and radius $r$. By $\R$ we denote the set of real numbers and by $|\cdot|$ the Euclidean norm of a real vector.
We use $\R_+$ and $\R_+^{\star}$ to denote the sets of non-negative and positive real numbers respectively.
A function $\alpha:\mathbb{R}_{+}\to\mathbb{R}_{+}$ is said to be of
class $\mathcal{K}$ if it is continuous, increasing, and satisfies $\alpha(0)=0$; it is said to be
of class $\mathcal{K}_{\infty}$ if it is of class $\mathcal{K}$ and unbounded. A continuous function
$\kappa:\mathbb{R}_{+}\times\mathbb{R}_{+}\to\mathbb{R}_{+}$ is said to be of class $\mathcal{KL}$ if $\kappa(\cdot,t)$ is of class
$\mathcal{K}$ for each $t\geq 0$ and, for each $s\geq0$, $\kappa(s,\cdot)$ is nonincreasing and converges to zero as $t$ tends to $+\infty$.


\section{Problem statement}\label{sec:problem state}
In this paper we consider a forward complete dynamical system evolving in a  Banach space $X$. 
Let us recall 
the 
following
definition,
proposed in~\cite{MIRONCHENKO2018}. 
\begin{definition}\label{FC} 
Let $\Q$ be a nonempty set. Denote by $\S$ a set of functions
$\s:\R_+\to \Q$  satisfying the following conditions: 
\begin{itemize}
\item [a)] $\S$ is closed by time-shift, i.e., for all $\s\in \S$ and all $\tau\geq 0$, the $\tau$-shifted function $\mathbb{T}_{\tau}\s:s\mapsto\s(\tau+s)$
belongs to $\S$;
\item[b)] $\S$ is closed by concatenation, i.e., for all $\s_1,\s_2\in \S$ and all $\tau>0$ the function $\s$ defined by $\s\equiv\s_1$ over $[0,\tau]$ and by $\s(\tau+t)=\s_2(t)$ for all $t>0$,
belongs to $\S$.
\end{itemize}
Let $\phi: \R_+\times X\times \S\to X$ be a map. The triple $\Sigma=(X,\S,\phi)$ is said to be a  \emph{forward complete dynamical system} if
the following properties hold:
\begin{itemize}
\item[i)] $\forall~(x,\s)\in X \times \S$, it holds that $\phi(0,x,\s)=x$;
\item [ii)] $\forall~(x,\s)\in X \times \S$, $\forall~t\geq 0$, and $\forall~\tilde\s\in\S$ such $\tilde\s=\s$ over $[0,t]$, it holds that 
 $\phi(t,x,\tilde\s)=\phi(t,x,\s)$; 
 \item [iii)] $\forall~(x,\s)\in X \times \S$, the map $t\mapsto \phi(t,x,\s)$ is continuous;
 \item [iv)] $\forall t,\tau\geq 0$, $\forall~(x,\s)\in X \times \S$, it holds that $\phi(\tau,\phi(t,x,\s),\mathbb{T}_{t}\s)=\phi(t+\tau,x,\s)$. 
\end{itemize}
We will refer to $\phi$ as the \emph{transition map} of $\Sigma$. 
\end{definition}

Observe that if $\Sigma$ is a forward complete dynamical system and $\S$ contains a constant function $\s$
then $(\phi(t,\cdot,\s))_{t\geq 0}$ is a strongly continuous nonlinear semigroup, whose definition is recalled below.

\begin{definition}\label{Semigroup}
Let $T_t:X\to X$, $t\geq 0$, be a family of nonlinear maps. We say that $(T_t)_{t\geq 0}$ is a strongly continuous nonlinear semigroup if
the following properties hold:
\begin{itemize}
\item[i)] $\forall~x\in X$,  $T_0 x=x$; 
\item [ii)] $\forall~t_1,t_2\geq 0$, $T_{t_1}T_{t_2}x=T_{t_1+t_2}x$; 
 \item [iii)] $\forall~x\in X$, the map $t\mapsto T_{t}x$ is continuous. 
\end{itemize}
\end{definition}

An example of forward complete dynamical system is given next.
\begin{example}[Piecewise constant switching system]\label{PC}
We denote by $\mathrm{PC}$ the set of piecewise constant $\s:\R_+\to \mathcal{Q}$, and we consider here 
the case $\S=\mathrm{PC}$. 
Let $\s\in \mathrm{PC}$ be constantly equal to $\s_k$ over $[t_{k}, t_{k+1})$, with $0=t_{0}<t_{1}<\cdots <t_{k}<t<t_{k+1}$, for $k\geq 0$. With each $\s_k$ we associate the 
strongly continuous nonlinear semigroup $(T_{\s_k}(t))_{t\geq 0}:=(\phi(t,\cdot,\s_k))_{t\geq 0}$. By concatenating the flows 
$(T_{\s_k}(t))_{t\geq 0}$, one can associate with $\s$ the family of nonlinear evolution operators 
\begin{equation*}\label{sigalotti1}
T_{\s}(t):= T_{\s_k}(t-t_{k})T_{\s_{k-1}}(t_{k}-t_{k-1})\cdots T_{\s_1}(t_{1}), 
\end{equation*}
$t\in [t_{k},t_{k+1})$. 
By consequence, system $\Sigma$ can be identified with the piecewise constant switching system
\begin{equation}\label{sigalotti}
x(t)=T_{\s}(t)x_0, \ x_{0}\in X, \ \s\in \mathrm{PC}.
\end{equation}
\end{example}
Thanks to the representation given by~\eqref{sigalotti}, this paper extends  to the nonlinear case
some of the results obtained in~\cite{Hante2011} on the 
characterization of the
exponential stability of switching linear systems in Banach spaces. 

Various notions of uniform (with respect to the functions in $\S$) exponential stability of system $\Sigma$ are given by the following definition.
\begin{definition}\label{0-GES def}
Consider the forward complete dynamical system $\Sigma=(X,\S,\phi)$. 
\begin{enumerate}
\item  We say that $\Sigma$ is 
\emph{uniformly globally exponentially stable at the origin} (UGES, for short) 
if there exist 
$M>0$ and $\lambda>0$ such that the transition map $\phi$ satisfies the inequality 
\begin{equation*}
\|\phi(t, x, \s)\|\leq M e^{-\lambda t} \|x\|,
\ \forall~t\geq 0,\; \forall~x\in X,\; \forall~\s\in\S.
\end{equation*}
\item We say that $\Sigma$ is 
\emph{uniformly locally exponentially stable at the origin} (ULES, for short) 
if there exist 
$R>0$, $M>0$, and $\lambda>0$ such that the transition map $\phi$ satisfies the inequality, 
for every  $t\geq 0$, $x\in B_X(0,R)$, and $\s\in\S$, 
\begin{equation}\label{les-def}
\|\phi(t, x, \s)\|\leq M e^{-\lambda t} \|x\|.
\end{equation}
If inequality~\eqref{les-def} holds true for a given $R>0$ then we say that $\Sigma$ is 
\emph{uniformly exponentially stable at the origin in $B_{X}(0,R)$} (UES in $B_{X}(0,R)$, for short). 
\item We say that $\Sigma$ is 
\emph{uniformly semi-globally exponentially stable at the origin} (USGES, for short) 
if, for every $r>0$ there exist $M(r)>0$ and $\lambda(r)>0$ such that
the transition map  $\phi$ satisfies the inequality 
\begin{equation}\label{sges-def}
\|\phi(t, x, \s)\|\leq M(r) e^{-\lambda(r) t} \|x\|,
\end{equation}
for every  $t\geq 0$, $x\in B_X(0,r)$, and $\s\in\S$.
\end{enumerate}
\end{definition}



\begin{remark}\label{remark-USGES}
Up to modifying $r\mapsto M(r)$, one can assume without loss of generality in the definition of  USGES that
$r\mapsto \lambda(r)$ can be taken 
constant and $r\mapsto M(r)$ nondecreasing.
Indeed, let us fix $M:=M(1)$ and $\lambda:=\lambda(1)$.
One has, by definition of $(M,\lambda)$,
\begin{equation*}
\|\phi(t, x, \s)\|\leq Me^{-\lambda t} \|x\|, 
\end{equation*}
for every  $t\geq 0$, $x\in B_X(0,1)$, and $\s\in\S$.
For $R>1$,  by using~\eqref{sges-def} there exists $t_{R}$ such that, for every $x\in B_X(0,R)$ with $\|x\|\geq 1$, one has for $t\geq t_{R}$,
\begin{equation*}
M(R)e^{-\lambda(R)t_{R}}R=1,~ \|\phi(t,x,\s)\|\leq\frac{\|x\|}{R}\le \min\{1,\|x\|\}.
\end{equation*}
This implies that, for every  $t\geq t_R$, $x\in B_X(0,R)$, and $\s\in\S$,
\begin{equation*}
\|\phi(t, x, \s)\|\leq Me^{-\lambda (t-t_{R})} \|\phi(t_{R},x,\s)\|\leq 
Me^{-\lambda (t-t_{R})}\|x\|.
\end{equation*}
By setting 
\begin{equation*}
\widehat M(R)=\begin{cases}
M &R\leq 1,\\
\max\{M(R),M(R)e^{|\lambda-\lambda(R)|t_{R}},Me^{\lambda t_R}\}&R>1,
\end{cases}
\end{equation*}
one has that  
$\|\phi(t, x, \s)\|\leq \widehat M(r) e^{-\lambda t} \|x\|, $
for every  $t\geq 0$, $x\in B_X(0,r)$, and $\s\in\S$.
Finally, we may replace $r\mapsto \widehat M(r)$ with the nondecreasing function $r\mapsto \inf_{\rho\geq r}\widehat M(\rho)$.
\end{remark}

The property of semi-global exponential stability introduced in Definition~\ref{0-GES def} 
turns out to be satisfied by 
some interesting 
class of infinite-dimensional systems, as described in the following two examples.


\begin{example}\label{example kdv without switch-old}
For $L>0$, let $\Omega=(0,L)$ and consider the controlled Korteweg--de Vries (KdV) equation 
\begin{equation}\label{kdv}
\begin{cases}
{\eta}_{t}+{\eta}_{x}+{\eta}_{xxx}+ \eta{\eta}_{x}+\rho(t,x,\eta)=0 & (x,t)\in \Omega\times\R_+,\\
\eta(t,0)=\eta(t,L)={\eta}_{x}(t,L)=0 & t\in \R_+,\\
\eta(0,x)={\eta}_0(x) & x\in \Omega,
\end{cases}
\end{equation}
where 
$\rho:\R_+\times\Omega\times \R\to\R$ is a sufficiently regular nonlinear function. 
The case $\rho\equiv 0$ is a well known model describing waves on shallow water surfaces~\cite{ISRAWI2019854}. The controllability and stabilizability properties of~\eqref{kdv} have been extensively studied in the literature (see, e.g.,~\cite{Menzala2002, Rosier2009}). In the case where the feedback control is of the form 
$\rho(t,x,\eta)=a(x)\eta$, for some non-negative function $a(\cdot)$ having nonempty support in $\Omega$, 
system
~\eqref{kdv} is globally exponentially stable in $X=L^2(0,L)$. In~\cite{MCPA17} the authors prove that, when a saturation is introduced in the feedback control $\rho$, the system is only semi-globally exponentially stable in $X$.
\end{example}

\begin{example}\label{example boundary wave without switch}
Consider the 1D wave equation with boundary damping  
\begin{equation}\label{boundary-wave without switch}
\begin{cases}
{\psi}_{tt}-\Delta \psi=0 &(x,t)\in(0,1)\times\R_+,\\
\psi(0,t)=0&t\in\R_+,\\
\psi_x(1,t)=-\sigma(t,\psi_t(1,t)) & t\in\R_+,\\
\psi(0)=\psi_0, \psi_t(0)=\psi_1 & x\in(0,1),
\end{cases}
\end{equation}
where $\sigma:\R_+\times \R\to\R$ is continuous.
This system is of special interest when 
the damping term $\sigma(t,\psi_t(1,t))$ represents a nonlinear feedback control. Once again, different types of stability  can be established (for global and semi-global exponential stability, see e.g.,~\cite{Marx20, bastin2016stability}). In particular, if  
 $\sigma$ is a nonlinearity of saturation type, only semi-global exponential stability holds true in
 \[X=\{(\psi_0,\psi_1)\mid \psi_0(0)=0, ~\psi_0'~ \mbox{and}~\psi_1\in L^{\infty}(0,1)\}.\] 
 \end{example}

%

The systems considered in Examples~\ref{example kdv without switch-old} and~\ref{example boundary wave without switch} do not depend on $\sigma\in\mathcal{S}$ as in Definition~\ref{FC}.
In Section~\ref{sec-example} we 
introduce and study variants
of such systems 
in a switching framework.

\section{Main results}\label{main results sec}

\subsection{Datko-type theorems}
In this section we give Datko-type theorems~\cite{Datko} 
for an
abstract forward complete dynamical system $\Sigma$. 
The uniform (local, semi-global, and global) exponential stability is characterized in terms of the $L^p$-norm of the trajectories of the system. This provides a generalization of the results obtained in~\cite{Ichikawa} for nonlinear semigroups. 

The following theorem characterizes the local exponential stability of system $\Sigma$. 

\begin{thm}\label{ULES}
Consider a forward complete dynamical system $\Sigma=(X,\S,\phi)$. 
Let $t_1, G_0>0$, and $\beta$ be a function of class $\mathcal{K}_{\infty}$
such that 
$\limsup_{r\downarrow 0} \frac{\beta(r)}{r}$ is finite and 
 \begin{equation}\label{exp-bounded2}
\|\phi(t,x,\s)\| \leq G_0\beta(\|x\|), \quad \forall~t\in[0,t_1],\; \forall~x\in X,\; \forall~\s\in\S. 
\end{equation}
The following statements are equivalent 
\begin{description}
\item [i)] System $\Sigma$ is ULES;
\item [ii)] for every $p>0$ there exist a nondecreasing function $k:\mathbb{R}_+\to \mathbb{R}_+$ and $R>0$ such that 
\begin{equation}\label{exp-lp3}
\int_{0}^{+\infty}{\|\phi(t,x,\s)\|}^{p} dt \leq k(\|x\|)^p\|x\|^{p}, 
\end{equation}
for every $x\in B_X(0,R)$ and $\s\in\S$;
\item [iii)] there exist $p>0$, $k:\mathbb{R}_+\to \mathbb{R}_+$ nondecreasing, and $R>0$ such that~\eqref{exp-lp3} holds true. 
\end{description}
\end{thm}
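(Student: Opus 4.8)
The plan is to prove the cyclic chain of implications \textbf{i)} $\Rightarrow$ \textbf{ii)} $\Rightarrow$ \textbf{iii)} $\Rightarrow$ \textbf{i)}. The first two are immediate. For \textbf{i)} $\Rightarrow$ \textbf{ii)}, if $\Sigma$ is ULES with constants $R,M,\lambda$, then for any fixed $p>0$ one simply integrates the exponential bound,
\[
\int_0^{+\infty}\|\phi(t,x,\s)\|^p\,dt\le M^p\|x\|^p\int_0^{+\infty}e^{-p\lambda t}\,dt=\frac{M^p}{p\lambda}\|x\|^p,
\]
so that~\eqref{exp-lp3} holds on $B_X(0,R)$ with the constant function $k\equiv M/(p\lambda)^{1/p}$. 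The implication \textbf{ii)} $\Rightarrow$ \textbf{iii)} is trivial, specializing to a single value of $p$.

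The whole content is in \textbf{iii)} $\Rightarrow$ \textbf{i)}. First I would establish forward invariance of a small ball. For $t\ge t_1$ and $s\in[t-t_1,t]$, the cocycle identity (property iv) together with the growth bound~\eqref{exp-bounded2} gives $\|\phi(t,x,\s)\|\le G_0\beta(\|\phi(s,x,\s)\|)$, whence $\|\phi(s,x,\s)\|\ge\beta^{-1}(\|\phi(t,x,\s)\|/G_0)$; integrating over $[t-t_1,t]$ and using~\eqref{exp-lp3} yields
\[
t_1\,\beta^{-1}\!\big(\|\phi(t,x,\s)\|/G_0\big)^p\le\int_0^{+\infty}\|\phi(s,x,\s)\|^p\,ds\le k(\|x\|)^p\|x\|^p.
\]
Combined with~\eqref{exp-bounded2} on $[0,t_1]$, this bounds $\sup_{t\ge0}\|\phi(t,x,\s)\|$ by some $\bar\rho(\|x\|)$ which tends to $0$ as $\|x\|\to0$ (since $k$ is nondecreasing); hence one may fix $R_0\in(0,R]$ so small that every trajectory issued from $B_X(0,R_0)$ remains in $B_X(0,\bar\rho)$ with $\bar\rho\le R$. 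On this invariant region the hypothesis $\limsup_{r\downarrow0}\beta(r)/r<\infty$ upgrades the growth bound to a \emph{linear} comparison: there is $M_0>0$ with $\|\phi(t,x,\s)\|\le M_0\|\phi(s,x,\s)\|$ whenever $0\le t-s\le t_1$.

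The core is then an energy argument on the tail integral $W(\tau):=\int_\tau^{+\infty}\|\phi(s,x,\s)\|^p\,ds$, which is finite, nonincreasing, and continuous by continuity of the flow. Using the cocycle identity, the shift-invariance of $\S$, and~\eqref{exp-lp3} applied at the point $\phi(\tau,x,\s)\in B_X(0,\bar\rho)$ with disturbance $\mathbb{T}_\tau\s$, I get the self-improving bound $W(\tau)\le k(\bar\rho)^p\|\phi(\tau,x,\s)\|^p$. On the other hand, the linear comparison gives $\|\phi(s,x,\s)\|^p\ge\|\phi(\tau+t_1,x,\s)\|^p/M_0^p$ for $s\in[\tau,\tau+t_1]$, so integrating over that window and combining the two estimates produces
\[
W(\tau)-W(\tau+t_1)\ge\frac{t_1}{M_0^p}\|\phi(\tau+t_1,x,\s)\|^p\ge\frac{t_1}{M_0^p\,k(\bar\rho)^p}\,W(\tau+t_1).
\]
Hence $W(\tau+t_1)\le(1+\delta)^{-1}W(\tau)$ with $\delta=t_1/(M_0^p k(\bar\rho)^p)>0$, which iterates to the exponential decay $W(\tau)\le(1+\delta)\,e^{-\nu\tau}W(0)$, where $\nu=t_1^{-1}\ln(1+\delta)$. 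Finally, the backward linear comparison over $[\tau-t_1,\tau]$ gives $\|\phi(\tau,x,\s)\|^p\le(M_0^p/t_1)\,W(\tau-t_1)$ for $\tau\ge t_1$, so that $\|\phi(\tau,x,\s)\|\le M e^{-\lambda\tau}\|x\|$ with $\lambda=\nu/p$ and $M$ depending only on $R_0$ (using $W(0)\le k(\|x\|)^p\|x\|^p$ and $k(\|x\|)\le k(R_0)$). The remaining range $\tau\in[0,t_1]$ is absorbed directly through~\eqref{exp-bounded2} and the local linear bound on $\beta$, yielding UES in $B_X(0,R_0)$ and therefore ULES.

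The main obstacle is producing the \emph{uniform} exponential rate in \textbf{iii)} $\Rightarrow$ \textbf{i)}: pointwise $L^p$-integrability only forces $\|\phi(t,x,\s)\|\to0$, and turning this into a geometric decay requires the interplay between the self-improving tail estimate (coming from the cocycle property and shift-invariance of $\S$) and the local linearization of $\beta$ on a forward-invariant ball. Securing the latter is precisely why the preliminary boundedness step and the restriction to a sufficiently small radius $R_0$ are needed.
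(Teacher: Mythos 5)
Your proposal is correct, and the implications i) $\Rightarrow$ ii) $\Rightarrow$ iii) as well as the preliminary part of iii) $\Rightarrow$ i) (the $\beta^{-1}$ trick giving $t_1\beta^{-1}(\|\phi(t,x,\s)\|/G_0)^p\le k(\|x\|)^p\|x\|^p$, the resulting forward invariance of a small ball, and the linearization of $\beta$ via the $\limsup$ hypothesis into a comparison $\|\phi(t,x,\s)\|\le M_0\|\phi(s,x,\s)\|$ for $0\le t-s\le t_1$) coincide with the paper's argument. Where you genuinely diverge is the core of iii) $\Rightarrow$ i). The paper writes $t\|\phi(t,x,\s)\|^p=\int_0^t\|\phi(t-\tau,\phi(\tau,x,\s),\mathbb{T}_\tau\s)\|^p\,d\tau$, bounds the integrand by $C(r)^p\|\phi(\tau,x,\s)\|^p$ and invokes \eqref{exp-lp3} to get the algebraic decay $\|\phi(t,x,\s)\|\le k(r)C(r)t^{-1/p}\|x\|$; it then fixes a time $t_0$ at which the flow is a contraction by a factor $c<1$ on $B_X(0,r)$ and iterates over windows of length $t_0$ to produce the exponential rate. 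You instead work with the tail energy $W(\tau)=\int_\tau^{+\infty}\|\phi(s,x,\s)\|^p\,ds$, combine the self-improving bound $W(\tau)\le k(\bar\rho)^p\|\phi(\tau,x,\s)\|^p$ (cocycle plus shift invariance) with the lower bound $W(\tau)-W(\tau+t_1)\ge (t_1/M_0^p)\|\phi(\tau+t_1,x,\s)\|^p$ to obtain the difference inequality $W(\tau+t_1)\le(1+\delta)^{-1}W(\tau)$, and then recover pointwise exponential decay from the geometric decay of $W$. Both mechanisms are classical routes to Datko-type results; the paper's version has the advantage that its intermediate conclusion (decay to a contraction at some finite time $t_0$, then concatenation) is reused verbatim for the semi-global and global variants (Theorems~\ref{USGES} and~\ref{datko}), while yours yields more explicit decay constants ($\nu=t_1^{-1}\log(1+\delta)$ with $\delta=t_1/(M_0^pk(\bar\rho)^p)$) and makes transparent that $W$ is precisely the (non-coercive) Lyapunov functional underlying the Lyapunov characterizations later in the paper. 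Your argument would adapt to those variants just as well, so the difference is one of presentation rather than strength.
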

\begin{remark}
 Observe that hypothesis~\eqref{exp-bounded2} in Theorem~\ref{ULES} is global over $X$. 
 Indeed, we do not know if the stability at 0 may be deduced from inequality~\eqref{exp-lp3} if one restricts~\eqref{exp-bounded2} to a ball $B_X(0,r)$.
\end{remark}

The following theorem characterizes the semi-global exponential stability of system $\Sigma$. 

\begin{thm}\label{USGES}
Consider a forward complete dynamical system $\Sigma=(X,\S,\phi)$. 
Let $t_1, G_0>0$, and $\beta$ be a function of class $\mathcal{K}_{\infty}$
such that 
$\limsup_{r\downarrow 0} \frac{\beta(r)}{r}$ is finite and 
 \begin{equation}
\|\phi(t,x,\s)\| \leq G_0\beta(\|x\|), \quad \forall~t\in[0,t_1],\; \forall~x\in X,\; \forall~\s\in\S. 
\end{equation}
The following statements are equivalent 
\begin{description}
\item [i)] System $\Sigma$ is USGES;

\item [ii)] for every $p>0$ there exists a nondecreasing function $k:\mathbb{R}_+\to \mathbb{R}_+$ such that,
for every $x\in X$ and $\s\in\S$ 
\begin{equation}\label{lp0}
\int_{0}^{+\infty}{\|\phi(t,x,\s)\|}^{p} dt \leq  k(\|x\|)^p\|x\|^{p};
\end{equation}

\item [iii)] there exist $p>0$ and $k:\mathbb{R}_+\to \mathbb{R}_+$ nondecreasing such that~\eqref{lp0} holds true. 
\end{description}
\end{thm}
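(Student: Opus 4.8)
The plan is to prove the cycle i) $\Rightarrow$ ii) $\Rightarrow$ iii) $\Rightarrow$ i), the only substantial implication being iii) $\Rightarrow$ i). The implication ii) $\Rightarrow$ iii) is trivial, and i) $\Rightarrow$ ii) is direct: by Remark~\ref{remark-USGES} I may assume the decay rate $\lambda$ in the definition of USGES is constant and $r\mapsto M(r)$ nondecreasing, so that for arbitrary $x\in X$, applying the USGES estimate on $B_X(0,\|x\|)$ gives $\|\phi(t,x,\s)\|\le M(\|x\|)e^{-\lambda t}\|x\|$; raising to the power $p$ and integrating yields $\int_0^{+\infty}\|\phi(t,x,\s)\|^p\,dt\le \frac{M(\|x\|)^p}{p\lambda}\|x\|^p$, so $k(s):=M(s)/(p\lambda)^{1/p}$ (nondecreasing) works for every $p>0$.

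For iii) $\Rightarrow$ i), I fix $r>0$, an initial condition $x\in B_X(0,r)$, and $\s\in\S$, and abbreviate $V(t):=\|\phi(t,x,\s)\|$ and $g(T):=\int_T^{+\infty}V(t)^p\,dt$; monotonicity of $k$ gives $g(0)\le k(r)^p\|x\|^p$. The first key step is a uniform a priori bound on trajectories issued from $B_X(0,r)$. Using the cocycle property iv) together with the growth estimate, for $t\ge t_1$ and $s\in[t-t_1,t]$ one has $V(t)=\|\phi(t-s,\phi(s,x,\s),\mathbb{T}_s\s)\|\le G_0\beta(V(s))$, hence $V(s)\ge\beta^{-1}(V(t)/G_0)$; integrating over $s\in[t-t_1,t]$ and comparing with $g(0)$ forces $t_1\,\beta^{-1}(V(t)/G_0)^p\le k(r)^p\|x\|^p$, so that $\sup_{t\ge0}V(t)\le m(r)$ for an explicit finite $m(r)$ depending only on $r$ (the short range $[0,t_1]$ being controlled directly by the growth bound).

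The second key step converts $L^p$-integrability into exponential decay. Since $\beta\in\mathcal{K}_\infty$ and $\limsup_{\rho\downarrow0}\beta(\rho)/\rho<+\infty$, the ratio $\beta(\rho)/\rho$ stays bounded on the compact range $[0,m(r)]$, so there is a finite $L(r)$ with $\beta(\rho)\le L(r)\rho$ for all $\rho\in[0,m(r)]$; this is where the $\limsup$ hypothesis is indispensable, as it forbids $\beta$ from being superlinear near the origin. Combined with the previous step, this gives $V(t)\le G_0 L(r)V(s)$ for $s\in[t-t_1,t]$, whence $t_1V(t)^p\le (G_0L(r))^p\big(g(t-t_1)-g(t)\big)\le (G_0L(r))^p g(t-t_1)$. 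Independently, applying hypothesis iii) to the shifted initial condition $\phi(T,x,\s)$ (legitimate since $\mathbb{T}_T\s\in\S$ and iii) holds for every initial state) and changing variables via the cocycle property gives $g(T)=\int_0^{+\infty}\|\phi(u,\phi(T,x,\s),\mathbb{T}_T\s)\|^p\,du\le k(m(r))^p V(T)^p = k(m(r))^p\big(-g'(T)\big)$, a Gr\"onwall-type differential inequality yielding $g(T)\le g(0)e^{-T/k(m(r))^p}$.

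Feeding this exponential decay of $g$ into the pointwise bound $V(t)^p\le \frac{(G_0L(r))^p}{t_1}g(t-t_1)$ produces $V(t)\le M(r)e^{-\lambda(r)t}\|x\|$ for $t\ge t_1$ with $\lambda(r)=1/(p\,k(m(r))^p)$ and an explicit $M(r)$, the interval $[0,t_1]$ being absorbed into $M(r)$ through $V(t)\le G_0\beta(\|x\|)\le G_0L(r)\|x\|$; as $r$ is arbitrary, this is exactly USGES. I expect the main obstacle to be the nonlinearity of $\beta$: without the uniform bound $m(r)$ one cannot linearize $\beta$, and it is the combination of the lower bound $V(s)\ge\beta^{-1}(V(t)/G_0)$ (giving boundedness) with the \emph{global} validity of iii) applied to the shifted data (giving the Gr\"onwall inequality) that closes the scheme. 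The semi-global rather than local character of the statement is precisely what allows iii) to be invoked freely at $\phi(T,x,\s)$ irrespective of its norm, so that the constants $M(r),\lambda(r)$ are permitted to depend on $r$.
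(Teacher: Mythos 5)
Your proof is correct, and while the easy implications and the preliminary steps match the paper's, the way you extract exponential decay from the $L^p$ estimate is genuinely different. The paper reduces Theorem~\ref{USGES} to the argument of Theorem~\ref{ULES}: after establishing the uniform trajectory bound $\|\phi(t,x,\s)\|\le C(\|x\|)\|x\|$ (your $m(r)$ together with the linearization $\beta(\rho)\le L(r)\rho$ plays exactly the role of the paper's function $\alpha$ in~\eqref{alpha-beta} and of the bound~\eqref{bound3}), it invokes the classical Datko identity $t\|\phi(t,x,\s)\|^{p}=\int_0^t\|\phi(t-\tau,\phi(\tau,x,\s),\mathbb{T}_{\tau}\s)\|^{p}d\tau\le (k(r)C(r))^{p}\|x\|^{p}$ to get decay of order $t^{-1/p}$, chooses $t_0$ so that the flow contracts by a factor $c<1$ after time $t_0$, and iterates over blocks of length $t_0$. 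You instead apply hypothesis iii) at the shifted state $\phi(T,x,\s)$ (legitimate by items a) and iv) of Definition~\ref{FC}) to obtain the differential inequality $g(T)\le k(m(r))^{p}\bigl(-g'(T)\bigr)$ for the tail $g(T)=\int_T^{+\infty}\|\phi(t,x,\s)\|^{p}dt$, integrate it by Gr\"onwall, and convert the exponential decay of $g$ into a pointwise bound through the backward estimate $t_1V(t)^{p}\le (G_0L(r))^{p}g(t-t_1)$. Both are standard Datko-type schemes; yours avoids the iteration and yields the explicit rate $\lambda(r)=1/(p\,k(m(r))^{p})$ directly, at the price of needing~\eqref{lp0} at the shifted points $\phi(T,x,\s)$, whose norms may exceed $r$ --- free here because~\eqref{lp0} holds on all of $X$, as you correctly note, whereas the same Gr\"onwall route in the purely local Theorem~\ref{ULES} would require the extra care the paper takes in choosing $R>rC(r)$. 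The only point worth making explicit is that $\|x\|\le m(r)$ (needed so that $\beta(\|x\|)\le L(r)\|x\|$ absorbs the interval $[0,t_1]$ into $M(r)$), which follows from the growth bound at $t=0$; with that observation the argument closes as claimed.
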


The particular case of uniformly globally exponentially stable systems is considered in the following theorem.
\begin{thm}\label{datko}
Consider a forward complete dynamical system $\Sigma=(X,\S,\phi)$. 
Let $t_1>0$ and $G_0>0$ be such that 
\begin{equation}\label{exp-bounded}
 \|\phi(t,x,\s)\| \leq G_0\|x\|, \ \forall~t\in [0,t_1],\; \forall~x\in X,\; \forall~\s\in\S. 
\end{equation}
The following statements are equivalent 
\begin{description}
\item [i)] System $\Sigma$ is UGES;
\item [ii)] for every $p>0$ there exists $k>0$ such that
\begin{equation}\label{unasoladai} 
\int_{0}^{+\infty}{\|\phi(t,x,\s)\|}^{p} dt \leq k^{p}\|x\|^{p},
\end{equation}
for every $x\in X$ and $\s\in\S$;
\item [iii)]  there exist $p,k>0$ such that \eqref{unasoladai} holds true.
\end{description}
\end{thm}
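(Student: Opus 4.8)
The plan is to prove the cycle of implications $i)\Rightarrow ii)\Rightarrow iii)\Rightarrow i)$. The first implication is immediate: if $\Sigma$ is UGES with constants $M,\lambda$, then for any $p>0$ one integrates $\|\phi(t,x,\s)\|^p\le M^p e^{-p\lambda t}\|x\|^p$ over $[0,+\infty)$ to obtain \eqref{unasoladai} with $k^p=M^p/(p\lambda)$. The implication $ii)\Rightarrow iii)$ is trivial, since $ii)$ provides the required $p$ and $k$ for, say, $p=1$. All the work is in $iii)\Rightarrow i)$, the genuine Datko-type direction, where from a single pair $(p,k)$ satisfying the integral bound, together with the short-time growth estimate \eqref{exp-bounded}, one must extract uniform exponential decay.

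For $iii)\Rightarrow i)$, the first step is to upgrade the local-in-time bound \eqref{exp-bounded} to a uniform-in-time bound $\|\phi(t,x,\s)\|\le \bar M\|x\|$ valid for all $t\ge0$, $x\in X$, $\s\in\S$. The key observation is that, by the cocycle property iv) of Definition~\ref{FC} and the shift-invariance a) of $\S$, for every $0\le s\le t$ with $t-s\le t_1$ one has $\phi(t,x,\s)=\phi(t-s,\phi(s,x,\s),\mathbb{T}_s\s)$ with $\mathbb{T}_s\s\in\S$, hence \eqref{exp-bounded} applied at the initial state $\phi(s,x,\s)$ gives $\|\phi(t,x,\s)\|\le G_0\|\phi(s,x,\s)\|$. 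Raising to the power $p$ and integrating over the window $s\in[\max\{0,t-t_1\},t]$ of length at most $t_1$, the left-hand side becomes a multiple of $\|\phi(t,x,\s)\|^p$, while the right-hand side is controlled by $G_0^p\int_0^{+\infty}\|\phi(s,x,\s)\|^p\,ds\le G_0^p k^p\|x\|^p$ thanks to \eqref{unasoladai}. This yields $\|\phi(t,x,\s)\|\le G_0 k\, t_1^{-1/p}\|x\|$ for $t\ge t_1$, and the bound for $t\in[0,t_1]$ is already contained in \eqref{exp-bounded}; enlarging the constant if necessary gives the claimed uniform bound $\bar M$.

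The second step is to promote boundedness to a quantitative decay rate. Using again iv) and a), for every $0\le s\le t$ one has $\|\phi(t,x,\s)\|=\|\phi(t-s,\phi(s,x,\s),\mathbb{T}_s\s)\|\le \bar M\|\phi(s,x,\s)\|$, now exploiting the uniform bound $\bar M$ on the full half-line. Integrating the $p$-th power over $s\in[0,t]$ and invoking \eqref{unasoladai} yields $t\,\|\phi(t,x,\s)\|^p\le \bar M^p k^p\|x\|^p$, that is, $\|\phi(t,x,\s)\|\le \bar M k\, t^{-1/p}\|x\|$ for all $t>0$. Consequently there is a fixed time $T>0$ (any $T>(2\bar M k)^p$ works) at which $\|\phi(T,x,\s)\|\le \tfrac12\|x\|$ uniformly in $x$ and $\s$. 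Finally I would iterate this contraction: writing $t=nT+r$ with $r\in[0,T)$ and applying the cocycle identity together with $\|\phi(nT,x,\s)\|\le 2^{-n}\|x\|$ (proved by induction using shift-invariance) and the uniform bound $\bar M$ on the residual time $r$, one obtains $\|\phi(t,x,\s)\|\le 2\bar M\,2^{-t/T}\|x\|$, which is exactly UGES with $\lambda=(\ln 2)/T$ and $M=2\bar M$.

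The main obstacle is the first step, the passage from the purely short-time bound \eqref{exp-bounded} to a bound uniform over all $t\ge0$: this is where the interplay between the integral hypothesis and the cocycle structure is essential, and where one must use shift-invariance of $\S$ to guarantee that the comparison states $\phi(s,x,\s)$ evolve under admissible (shifted) disturbances. A secondary point requiring care, owing to forward completeness (no backward flow), is that all comparisons must be written forward from an intermediate time $s$ to the final time $t$ via the identity $\phi(t,x,\s)=\phi(t-s,\phi(s,x,\s),\mathbb{T}_s\s)$, rather than by inverting the flow; this is why the growth estimate \eqref{exp-bounded}, and not a decay estimate, already suffices to launch the argument.
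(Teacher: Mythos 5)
Your proposal is correct and follows essentially the same route as the paper: the paper proves Theorem~\ref{datko} by specializing the argument of Theorems~\ref{ULES}--\ref{USGES} to $\beta=\mathrm{id}$ and constant $k$, which is exactly your chain — first the sliding-window integration over $[t-t_1,t]$ to upgrade \eqref{exp-bounded} to a uniform-in-time bound, then the identity $t\|\phi(t,x,\s)\|^{p}\le \bar M^{p}\int_0^{t}\|\phi(s,x,\s)\|^{p}\,ds$ to get algebraic decay, and finally the fixed-time contraction iterated via the cocycle property. The only differences are cosmetic (you fix the contraction factor at $1/2$ where the paper uses a generic $c\in(0,1)$, and you isolate the uniform bound $\bar M$ as an explicit intermediate step rather than folding it into the function $C$).
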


\begin{remark}
By the shift-invariance properties given by items a) and iv) of Definition~\ref{FC}, 
it is easy to see that~\eqref{exp-bounded} implies
\begin{equation}\label{rem-paolo}
\|\phi(t,x,\s)\| \leq Me^{\lambda t}\|x\|,\ \forall~t\geq 0,\; \forall~x\in X,\; \forall~\s\in\S,
\end{equation}
where $M=G_0$ and $\lambda = \max\{0,\log (\frac{G_0}{t_1})\}$. Notice that inequality~\eqref{rem-paolo} is a nontrivial
requirement  
on system $\Sigma$. Even in the linear case, and even if~\eqref{rem-paolo} is satisfied for each constant $\s\equiv \s_c$, uniformly with respect to $\s_c$, it does not follow that a similar exponential bound holds for the corresponding 
system $\Sigma$ (see~\cite[Example~1]{Hante2011}).
 \end{remark}

\subsection{Lyapunov characterization of exponential stability}\label{lyap-sec}
In this section we characterize the exponential stability of a forward complete dynamical system 
through the existence of a Lyapunov functional.
First, let us recall the  definition of Dini derivative of a 
functional $V:X\to \mathbb{R}_+$. 

\begin{definition}\label{Dini derivative}
Consider a forward complete dynamical system $\Sigma=(X,\S,\phi)$. 
The upper and lower Dini derivatives  $\overline{D}_{\s}V:X\to 
\R\cup\{\pm \infty\}$ and $\underline{D}_{\s}V:X\to \R\cup\{\pm \infty\}$ of a  
functional $V:X\to \mathbb{R}_+$
are defined, respectively,  as 
\begin{equation*}
\overline{D}_{\s}V(x)=\limsup_{h\downarrow 0}\frac{1}{h}\left(V(\phi(h,x,\s))-V(x)\right),
\end{equation*}
and 
\begin{equation*}
\underline{D}_{\s}V(x)=\liminf_{h\downarrow 0}\frac{1}{h}\left(V(\phi(h,x,\s))-V(x)\right),
\end{equation*}
where $x\in X$ and $\s\in\S$.
\end{definition}

\begin{remark}\label{rem-dini-constant}
When $\S$ contains $\mathrm{PC}$, we can associate with every $q\in \Q$ the upper and lower Dini derivatives $\overline{D}_{q}V$
and $\underline{D}_{q}V$ corresponding to $\s\equiv q$. Notice that for every 
$\s\in \mathrm{PC}$ and sufficiently small $h>0$, we have $\s_{|_{(0,h)}}\equiv q$, for some $q\in\mathcal{Q}$. By consequence, we have $\overline{D}_{\s}V(\varphi)=\overline{D}_{q}V(\varphi)$ and $\underline{D}_{\s}V(\varphi)=\underline{D}_{q}V(\varphi)$. 
\end{remark}

The regularity of a Lyapunov functional associated with an exponentially stable 
forward complete dynamical
system $\Sigma=(X,\S,\phi)$ is recovered, in our results, from 
the regularity of the transition map $\phi$. 
The $\S$-uniform continuity of the transition map $\phi$ with respect to the initial condition is defined as follows.
\begin{definition}
We say that the transition map $\phi$ of $\Sigma=(X,\S,\phi)$ is $\S$-uniformly continuous if, for any $\bar t>0$, $x\in X$, and $\varepsilon>0$, there exists $\eta>0$ such that
$$\|\phi(t,x,\s)-\phi(t,y,\s)\|\leq \varepsilon,
$$
for every $t\in [0,\bar t]$, $y\in B_X(x,\eta)$, and $\s\in \S$.
\end{definition}

Similarly, the notion of $\S$-uniform Lipschitz continuity of the transition map 
is given by the following definition.
\begin{definition}\label{def:philip}
We say that the transition map $\phi$ of $\Sigma=(X,\S,\phi)$ is $\S$-uniformly Lipschitz continuous (respectively, $\S$-uniformly Lipschitz continuous on bounded sets) if, for any $\bar t>0$ (respectively, $\bar t>0$ and $R>0$), there exists $l(\bar t)>0$ (respectively, $l(\bar t,R)>0$) such that  
\[\|\phi(t,x,\s)-\phi(t,y,\s)\|\leq l(\bar t)\|x-y\|, 
\]
for every $t\in [0,\bar t]$, $x,y \in X$, and $\s\in \S$ 
(respectively,
\[\|\phi(t,x,\s)-\phi(t,y,\s)\|\leq l(\bar t,R)\|x-y\|,
\]
for every $t\in [0,\bar t]$, $x,y \in B_X(0,R)$, and $\s\in \S$). 
\end{definition}

The following theorem 
shows that the existence of a non-coercive Lyapunov functional is sufficient for proving the uniform exponential stability of the forward complete dynamical system $\Sigma$, provided that inequality~\eqref{exp-bounded2} holds true.  
\begin{thm}\label{ULES-SG}
Consider a forward complete dynamical system $\Sigma=(X,\S,\phi)$. 
Let $t_1, G_0>0$, and $\beta$ be a function of class $\mathcal{K}_{\infty}$
such that 
$\limsup_{r\downarrow 0} \frac{\beta(r)}{r}$ is finite and 
 \begin{equation}\label{exp-bounded-bis}
\|\phi(t,x,\s)\| \leq G_0\beta(\|x\|), \quad \forall~t\in[0,t_1],\; \forall~x\in X,\; \forall~\s\in\S.
\end{equation}
Then,
\begin{itemize}
\item[i)] if there exist $R>0$, $V: B_{X}(0,R)\to \R_+$, and $p, c>0$ 
such that, for every $x\in B_X(0,R)$ and $\s\in \S$, 
\begin{align}
V(x)&\leq c\|x\|^{p},  \label{noncoe1-SG}\\
\underline{D}_{\s}V(x)&\leq -\|x\|^{p},\label{dini1-SG}
\end{align}
and $V(\phi(\cdot,x,\s))$ is continuous from the left at  every $t>0$ such that $\phi(t,x,\s)\in B_{X}(0,R)$,
then system $\Sigma$ is ULES;
\item [ii)] 
if i) holds true for every $R>0$ with $V=V_R$, $c=c_R$ and $p=p_R$ and 
\begin{equation}\label{Pettersen}
\limsup_{R\to +\infty}\beta^{-1}\left(\frac{R}{G_0}\right)\min\left\{1,\left(\frac{t_1}{c_R}\right)^{\frac{1}{p_R}}\right\} =+\infty,
\end{equation} 
then system $\Sigma$ is USGES;
\item [iii)] 
if $\beta$ in~\eqref{exp-bounded-bis} is equal to the identity function and there exist 
$p,c>0$ and 
a functional $V: X\to \R_+$ 
such that, for every $x\in X$ and $\s\in \S$, 
the map $t\mapsto V(\phi(t,x,\s))$ is continuous from the left,
and $V$ satisfies inequalities~\eqref{noncoe1-SG}-\eqref{dini1-SG} in $X$, then system $\Sigma$ is UGES.
\end{itemize}
\end{thm}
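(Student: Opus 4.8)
\section*{Proof proposal for Theorem~\ref{ULES-SG}}

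The plan is to reduce each of the three items to the corresponding Datko-type theorem (Theorem~\ref{datko} for iii), Theorem~\ref{ULES} for i), and Theorem~\ref{USGES}/\ref{ULES} for ii)) by turning the Lyapunov decrease into an $L^p$ bound on the trajectories. The engine is a one-sided comparison principle applied to $g:=V(\phi(\cdot,x,\s))$. Fix $x,\s$ and set $\psi(t):=-\|\phi(t,x,\s)\|^{p}$, which is continuous by item iii) of Definition~\ref{FC}. Using the cocycle property iv) of Definition~\ref{FC} one has, for the lower right Dini derivative of $g$, $\underline{D}^{+}g(t)=\liminf_{h\downarrow 0}\tfrac1h\bigl(V(\phi(h,\phi(t,x,\s),\mathbb{T}_t\s))-V(\phi(t,x,\s))\bigr)=\underline{D}_{\mathbb{T}_t\s}V(\phi(t,x,\s))\le\psi(t)$ by~\eqref{dini1-SG}. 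I claim that if $g$ is continuous from the left then $g(T)-g(0)\le\int_0^T\psi(t)\,dt$ on any interval where the hypotheses hold. To see this, fix $\varepsilon>0$, set $\Phi(t)=g(t)-\int_0^t\psi-\varepsilon t$, and let $c=\sup\{t\le T:\Phi(t)\le\Phi(0)\}$; left-continuity gives $\Phi(c)\le\Phi(0)$, and if $\Phi(T)>\Phi(0)$ then $\Phi>\Phi(0)$ on $(c,T]$, while $\underline{D}^{+}\Phi(c)\le-\varepsilon<0$ produces a sequence $h_n\downarrow 0$ with $\Phi(c+h_n)<\Phi(c)\le\Phi(0)$, a contradiction. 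Letting $\varepsilon\downarrow 0$ proves the claim. Since $g\ge 0$ and $g(0)=V(x)\le c\|x\|^{p}$ by~\eqref{noncoe1-SG}, this yields $\int_0^{T}\|\phi(t,x,\s)\|^{p}\,dt\le c\|x\|^{p}$.

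For item iii), where $\beta=\mathrm{id}$ and $V$ is defined on all of $X$, no trajectory can leave the domain of $V$, so the comparison principle applies on $[0,\infty)$ for every $x\in X$ and $\s\in\S$. Letting $T\to+\infty$ gives $\int_0^{\infty}\|\phi(t,x,\s)\|^{p}\,dt\le c\|x\|^{p}=k^{p}\|x\|^{p}$ with $k=c^{1/p}$. Because $\beta=\mathrm{id}$ turns~\eqref{exp-bounded-bis} into hypothesis~\eqref{exp-bounded} of Theorem~\ref{datko}, the implication iii)$\Rightarrow$i) of that theorem gives UGES.

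For item i) the obstacle is non-coercivity: a small value of $V$ does not control $\|x\|$ from below, so nothing a priori prevents $\phi(t,x,\s)$ from leaving $B_X(0,R)$, where the inequalities on $V$ are assumed. I handle this by an exit-time estimate extracted from the growth bound~\eqref{exp-bounded-bis}. Let $\tau=\sup\{t:\phi(s,x,\s)\in B_X(0,R)\ \forall s\in[0,t]\}$. On $[0,\tau)$ the comparison principle still applies, so $\int_0^{\tau}\|\phi\|^{p}\le c\|x\|^{p}$. If $\tau<+\infty$, then $\|\phi(\tau,x,\s)\|=R$ by continuity, and applying~\eqref{exp-bounded-bis} on the window $[\max\{0,\tau-t_1\},\tau]$ with initial state $\phi(t,x,\s)$ and duration $\tau-t\le t_1$ gives $R\le G_0\beta(\|\phi(t,x,\s)\|)$, hence $\|\phi(t,x,\s)\|\ge\beta^{-1}(R/G_0)=:\rho_R$ throughout that window. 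Integrating forces $\|x\|\ge(t_1/c)^{1/p}\rho_R$ when $\tau\ge t_1$, and $\|x\|\ge\rho_R$ when $\tau<t_1$. Contrapositively, if $\|x\|<r_0:=\rho_R\min\{1,(t_1/c)^{1/p}\}$ then $\tau=+\infty$ and $\int_0^{\infty}\|\phi\|^{p}\le c\|x\|^{p}$ holds for all $x\in B_X(0,r_0)$. This is exactly hypothesis iii) of Theorem~\ref{ULES} with $k\equiv c^{1/p}$ constant, so $\Sigma$ is ULES.

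For item ii) the identical computation, run with the family $(V_R,c_R,p_R)$, produces the integral bound on $B_X(0,r_0(R))$ with $r_0(R)=\beta^{-1}(R/G_0)\min\{1,(t_1/c_R)^{1/p_R}\}$, which is precisely the quantity appearing in~\eqref{Pettersen}. Thus~\eqref{Pettersen} says $\limsup_{R\to+\infty}r_0(R)=+\infty$, so for every $r>0$ there is $R$ with $r_0(R)>r$; the integral bound then holds uniformly on $B_X(0,r)$, and Theorem~\ref{ULES} yields an exponential estimate on $B_X(0,r)$. Since $r$ is arbitrary, this is exactly USGES. The main obstacle throughout is the non-coercivity in items i)--ii): keeping the trajectory inside the region where $V$ is controlled cannot be done with $V$ alone and rests on the two-sided information hidden in~\eqref{exp-bounded-bis}, the clause $\limsup_{r\downarrow 0}\beta(r)/r<\infty$ being what makes $\rho_R$ and the radius $r_0(R)$ quantitatively usable; the delicate point in ii) is the bookkeeping ensuring that the output radius of the Datko step is comparable to $r_0(R)$, so that the balls on which exponential decay is obtained indeed exhaust $X$.
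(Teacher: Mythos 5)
Your proof is correct and follows essentially the same route as the paper: the integral inequality $V(\phi(t,x,\s))-V(x)\le-\int_0^t\|\phi(\tau,x,\s)\|^p\,d\tau$ (which the paper obtains by citing Hagood's comparison theorem for Dini derivatives and you prove inline with an $\varepsilon$-perturbation argument), the first-exit-time estimate showing that trajectories issued from $B_X(0,r_0)$ with $r_0=\beta^{-1}(R/G_0)\min\{1,(t_1/c)^{1/p}\}$ never leave $B_X(0,R)$ --- your $r_0$ is exactly the paper's $\gamma$ --- and the reduction of items i) and iii) to the Datko-type Theorems~\ref{ULES} and~\ref{datko}. In item ii) you leave the last step at the same level of detail as the paper (which disposes of it in one sentence): concluding an exponential estimate on a prescribed ball $B_X(0,r)$ requires revisiting the proof of Theorem~\ref{ULES} rather than its statement, since the statement only returns decay on some smaller ball, and you correctly identify this bookkeeping as the delicate point without carrying it out.
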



The following theorem 
states that the existence of a coercive Lyapunov functional is necessary for the
uniform exponential stability of a forward complete dynamical system. 
 \begin{thm}\label{coercive1}
 Consider a forward complete dynamical system $\Sigma=(X,\S,\phi)$ and assume that the transition map $\phi$ is $\S$-uniformly continuous. If $\Sigma$ is USGES then, for every $r>0$ there exist $\underline{c}_r,\overline{c}_r>0$ and a continuous functional $V_r:  X\to \R_+$,  such that 
\begin{align}
\label{norm comparison2}
\underline{c}_r\|x\|\leq V_r(x)\leq \overline{c}_r\|x\|,& \quad \forall~x\in B_X(0,r), \\
\label{dini9}
\overline{D}_{\s}V_r(x)\leq -\|x\|,& \quad \forall~x\in B_X(0,r),\; \forall~\s\in\S,\\
V_r=V_R\mbox{ on }X, &\quad \forall R>0\mbox{ such that }\lambda(r)=\lambda(R)\nonumber\\
&\quad\mbox{ and }M(r)=M(R),\label{eq:indis}
\end{align}
where $\lambda(\cdot), M(\cdot)$ are as in~\eqref{sges-def}.
Moreover, in the case where the transition map $\phi$ is $\S$-uniformly Lipschitz continuous (respectively, 
$\S$-uniformly Lipschitz continuous on bounded sets), 
$V_r$ 
can be taken Lipschitz continuous (respectively, Lipschitz continuous on bounded sets).
\end{thm}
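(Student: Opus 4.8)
The plan is to produce, for each $r$, a single globally defined functional by a Massera/sup-type construction rather than the integral one used in the Datko theorems, because the sup version delivers the \emph{lower} comparison bound for free. By Remark~\ref{remark-USGES} I may first assume that $\lambda(\cdot)\equiv\lambda$ is constant and that $r\mapsto M(r)$ is nondecreasing. I would then fix any $\lambda'\in(0,\lambda)$ and set
\[
V(x)=\sup_{\s\in\S}\ \sup_{t\ge 0}\ e^{\lambda' t}\,\|\phi(t,x,\s)\|,\qquad x\in X .
\]
Since every $x$ lies in $B_X(0,\|x\|)$, the USGES estimate gives $e^{\lambda' t}\|\phi(t,x,\s)\|\le M(\|x\|)\|x\|\,e^{-(\lambda-\lambda')t}\le M(\|x\|)\|x\|$, so $V$ is finite on all of $X$, and $V(0)=0$ because $\phi(t,0,\s)=0$. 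Evaluating at $t=0$ yields $V(x)\ge\|x\|$, while for $x\in B_X(0,r)$ the same estimate gives $V(x)\le M(r)\|x\|$; hence \eqref{norm comparison2} holds for $V_r:=V/\lambda'$ with $\underline{c}_r=1/\lambda'$ and $\overline{c}_r=M(r)/\lambda'$.

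The core step is the Dini estimate, which I expect to go through cleanly using only the shift/concatenation/cocycle structure of Definition~\ref{FC}. Fixing $x\in X$, $\s\in\S$, $h>0$, and an arbitrary $\s'\in\S$, closure under concatenation (item b) produces $\tilde\s\in\S$ with $\tilde\s=\s$ on $[0,h]$ and $\mathbb{T}_h\tilde\s=\s'$; then $\phi(h,x,\tilde\s)=\phi(h,x,\s)$ (item ii) and the cocycle identity (item iv) give $\phi(t,\phi(h,x,\s),\s')=\phi(t+h,x,\tilde\s)$, so that
\[
e^{\lambda' t}\|\phi(t,\phi(h,x,\s),\s')\|=e^{-\lambda' h}\,e^{\lambda'(t+h)}\|\phi(t+h,x,\tilde\s)\|\le e^{-\lambda' h}V(x).
\]
Taking the supremum over $t\ge 0$ and $\s'\in\S$ yields $V(\phi(h,x,\s))\le e^{-\lambda' h}V(x)$; dividing by $h$ and letting $h\downarrow 0$ gives $\overline{D}_{\s}V(x)\le-\lambda' V(x)$, whence $\overline{D}_{\s}V_r(x)\le-V(x)\le-\|x\|$, which is \eqref{dini9}. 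I would also record that \eqref{eq:indis} is then immediate: the functional depends on $r$ only through the rate $\lambda(r)$ (via $\lambda'$) and, for the comparison constants, through $M(r)$, so two radii sharing the pair $(\lambda(r),M(r))$ yield the same $V_r$.

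The regularity claims are where $\S$-uniform continuity enters, and I expect them to be the main obstacle, precisely because $V$ is a supremum over the unbounded interval $[0,\infty)$. The key device is that the strict inequality $\lambda'<\lambda$ makes the tail negligible: for $x,y\in B_X(0,R)$ one has $e^{\lambda' t}\|\phi(t,\cdot,\s)\|\le M(R)R\,e^{-(\lambda-\lambda')t}$, so given $\varepsilon>0$ there is $T=T(R,\varepsilon)$ beyond which every term in the supremum is below $\varepsilon$; since $V\ge\|\cdot\|$, the supremum defining $V(x)$ (and $V(y)$) is, up to $\varepsilon$, attained for $t\in[0,T]$. On this compact time window $\S$-uniform continuity bounds $\big|\|\phi(t,x,\s)\|-\|\phi(t,y,\s)\|\big|$ uniformly in $\s$, giving $|V(x)-V(y)|$ small and hence continuity of $V_r$; in the $\S$-uniformly Lipschitz case the same reduction turns the modulus into $e^{\lambda' T}l(T,R)\|x-y\|$, yielding Lipschitz continuity on bounded sets (respectively, global Lipschitz continuity, when $l$ is $R$-independent). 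The delicate points to check are this tail estimate that compactifies the time-supremum and, in the Lipschitz case, verifying that the factor $e^{\lambda' T}l(T)$ remains controlled — this interplay between the decay margin $\lambda-\lambda'$ and the growth of the continuity modulus is the crux of the regularity argument.
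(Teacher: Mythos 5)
Your construction is genuinely different from the paper's: you take the supremum over the whole half-line $t\in[0,\infty)$ (after normalizing $\lambda(\cdot)$ to a constant via Remark~\ref{remark-USGES}), whereas the paper truncates the supremum at the finite horizon $\bar t=\log M(r)/(\lambda-\gamma)$, chosen precisely so that $M e^{(\gamma-\lambda)\bar t}=1$. Your version buys several things: a single $r$-independent functional (so \eqref{eq:indis} is trivial), a decrease estimate $\overline{D}_{\s}V\le-\lambda' V$ valid on all of $X$ rather than only on $B_X(0,r)$, and no need for the paper's auxiliary step showing that the supremum over $[0,\bar t+h]$ equals the supremum over $[0,\bar t]$ for $x\in B_X(0,r)$ (which is exactly what the choice of $\bar t$ is for). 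The comparison bounds, the Dini estimate, \eqref{eq:indis}, continuity, and Lipschitz continuity on bounded sets all go through; in fact, for $x\in B_X(0,R)$ and $t\ge T(R):=\log M(R)/(\lambda-\lambda')$ each term in your supremum is at most $\|x\|$, i.e.\ no more than the $t=0$ term, so the supremum reduces \emph{exactly} to the compact window $[0,T(R)]$ and $\S$-uniform (Lipschitz) continuity there finishes the job without the $\varepsilon$-bookkeeping you sketched.

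The one genuine gap is the final claim that your $V$ is globally Lipschitz when $\phi$ is $\S$-uniformly Lipschitz continuous. Your reduction gives $|V(x)-V(y)|\le e^{\lambda' T(R)}l(T(R))\|x-y\|$ for $x,y\in B_X(0,R)$, but the truncation time $T(R)$ must grow like $\log M(R)$, so even with $l$ independent of $R$ the prefactor $e^{\lambda' T(R)}=M(R)^{\lambda'/(\lambda-\lambda')}$ is unbounded whenever $M(\cdot)$ is --- which is the generic USGES situation. Your parenthetical ``global Lipschitz continuity when $l$ is $R$-independent'' therefore does not follow from your argument, and the infinite-horizon functional cannot be expected to satisfy it. This is exactly what the paper's $r$-dependent finite horizon is for: freezing the window at $\bar t(r)$ yields $|V_r(x)-V_r(y)|\le\gamma^{-1}e^{\gamma\bar t}l(\bar t)\|x-y\|$ on all of $X$, at the price of re-proving the decrease only for $x\in B_X(0,r)$ via the identity $Me^{(\gamma-\lambda)\bar t}=1$. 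To repair your proof of the ``moreover'' part you would truncate $V_r$ at $T(r)$ and add that one extra step, i.e.\ essentially revert to the paper's construction there. The gap is harmless in the UGES case, where $M$ is constant and your single functional is indeed globally Lipschitz, so Corollary~\ref{sampling theorem-UGES} would still be reachable from your construction.
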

%
%
To conclude this section, we state the following corollary which characterizes the uniform global exponential stability
of a forward complete dynamical system, completing Item iii) of Theorem~\ref{ULES-SG}.

\begin{cor}\label{main}
Consider a forward complete dynamical system $\Sigma=(X,\S,\phi)$. Assume that the transition map $\phi$ is $\S$-uniformly continuous. If there exist $t_1>0$ and $G_0>0$ such that
 \begin{equation}\label{exp-bound cor} 
\|\phi(t,x,\s)\| \leq G_0\|x\|, \quad \forall~t\in[0,t_1],\; \forall~x\in X,\; \forall~\s\in\S,
\end{equation}
then the following statements are equivalent:
\begin{itemize}
\item [i)] System $\Sigma$ is UGES;
\item [ii)] there exists a continuous functional $V: X\to \R_+$ and positive reals $p$, $\underline c$, and $\overline c$ such that 
\begin{equation*}\label{norm comparison3}
\underline c\|x\|^{p}\leq V(x)\leq \overline c\|x\|^{p}, \quad \forall~x\in X, 
\end{equation*}
and 
\begin{equation}\label{dini10}
\overline{D}_{\s}V(x)\leq -\|x\|^{p}, \quad \forall~x\in X,\; \forall~\s\in\S;
\end{equation}
\item [iii)] there exist a functional $V: X\to \R_+$ and positive reals $p$ and $c$ such that, for every $x\in X$ and $\s\in \S$, the map $t\mapsto V(\phi(t,x,\s))$ is continuous from the left,  inequality~\eqref{dini10} is satisfied and the following inequality holds
\begin{equation*}\label{norm comparison3-nonco}
V(x)\leq c\|x\|^{p}, \quad \forall~x\in X. 
\end{equation*} 
\end{itemize}  
\end{cor}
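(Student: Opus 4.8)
The plan is to prove the corollary as a cycle of implications i) $\Rightarrow$ ii) $\Rightarrow$ iii) $\Rightarrow$ i), so that it follows from Theorem~\ref{ULES-SG} and Theorem~\ref{coercive1} specialized to the global regime, in which the semi-global parameters $M(r),\lambda(r)$ of Definition~\ref{0-GES def} may be taken independent of $r$.

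The implications ii) $\Rightarrow$ iii) and iii) $\Rightarrow$ i) are the routine ones. For ii) $\Rightarrow$ iii) I would simply discard the lower bound: the upper bound $V(x)\leq\overline c\|x\|^{p}$ plays the role of the hypothesis $V(x)\le c\|x\|^{p}$, the Dini inequality \eqref{dini10} is common to both items, and since $V$ is continuous and $t\mapsto\phi(t,x,\s)$ is continuous by Definition~\ref{FC}~iii), the composition $t\mapsto V(\phi(t,x,\s))$ is continuous, hence left-continuous. For iii) $\Rightarrow$ i) I would invoke Theorem~\ref{ULES-SG}~iii): hypothesis \eqref{exp-bound cor} is precisely \eqref{exp-bounded-bis} with $\beta=\mathrm{id}$, for which $\limsup_{r\downarrow0}\beta(r)/r=1$ is finite, and, since $\underline D_\s V\le\overline D_\s V$, the bound \eqref{dini10} forces $\underline D_\s V(x)\le-\|x\|^{p}$, i.e. \eqref{dini1-SG}; thus every hypothesis of Theorem~\ref{ULES-SG}~iii) is met and the latter yields UGES.

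The substance lies in i) $\Rightarrow$ ii), the global coercive converse. I would specialize the construction underlying Theorem~\ref{coercive1}, setting
\[
V(x)=\sup_{\s\in\S,\ t\ge0}\|\phi(t,x,\s)\|\,e^{\mu t},\qquad \mu\in(0,\lambda),
\]
where $M,\lambda$ are the UGES constants. The $t=0$ term gives $V(x)\ge\|x\|$, while UGES gives $\|\phi(t,x,\s)\|e^{\mu t}\le Me^{-(\lambda-\mu)t}\|x\|\le M\|x\|$, so that $\|x\|\le V(x)\le M\|x\|$ with constants independent of any ball; this is the decisive advantage of the explicit formula and settles the uniformity of the comparison constants that \eqref{eq:indis} alone leaves open. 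Continuity of $V$ is exactly where $\S$-uniform continuity of $\phi$ enters: since for $t\ge T:=\max\{0,\ln M/(\lambda-\mu)\}$ the term is bounded by $\|x\|\le V(x)$, the supremum may be restricted to the fixed interval $[0,T]$, on which $\S$-uniform continuity controls $|V(y)-V(x)|$ by $e^{\mu T}$ times the uniform modulus of continuity of $\phi$. For the Dini estimate I would use the cocycle identity in Definition~\ref{FC}~iv) together with closure of $\S$ under time-shift and concatenation: splicing any disturbance acting after time $h$ onto $\s$ on $[0,h]$ shows $V(\phi(h,x,\s))\le e^{-\mu h}V(x)$, whence $\overline D_\s V(x)\le-\mu V(x)\le-\mu\|x\|$, which becomes $\overline D_\s V(x)\le-\|x\|$ after rescaling $V$ by $1/\mu$. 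The main obstacle is this final block: extracting the worst-case contraction $V(\phi(h,x,\s))\le e^{-\mu h}V(x)$ uniformly over all disturbances (which is precisely what forces the supremum-over-$\S$ form of $V$) while simultaneously keeping $V$ continuous, the continuity being recoverable only through the truncation of the supremum to the fixed interval $[0,T]$ afforded by uniform exponential decay.
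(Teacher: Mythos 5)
Your proposal is correct and follows the same route as the paper: the cycle i)\,$\Rightarrow$\,ii)\,$\Rightarrow$\,iii)\,$\Rightarrow$\,i), with iii)\,$\Rightarrow$\,i) obtained from Theorem~\ref{ULES-SG} (item iii), $\beta=\mathrm{id}$, $\underline{D}_\s V\le\overline{D}_\s V$) and ii)\,$\Rightarrow$\,iii) being immediate. The only cosmetic difference is in i)\,$\Rightarrow$\,ii): the paper simply invokes Theorem~\ref{coercive1} together with~\eqref{eq:indis} (and the explicit constants~\eqref{eq:upb}--\eqref{eq:lob}, which are $r$-independent under UGES), whereas you re-derive that same functional $\sup_{\s,t}e^{\mu t}\|\phi(t,x,\s)\|$ from scratch in the global setting --- this is exactly the construction~\eqref{lyapunov3} used in the paper's proof of Theorem~\ref{coercive1}, so no new idea is involved and nothing is missing.
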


\begin{proof}
The fact that item i) implies ii) is a straightforward consequence of Theorem~\ref{coercive1}, using, in particular, \eqref{eq:indis}. 
 Moreover ii) clearly implies iii).
Finally, iii) implies i), as follows from Theorem~\ref{ULES-SG}.
\end{proof}

\section{Discussion: comparison with the current state of the art}\label{sec: discussion}
We compare here the results stated in Section~\ref{lyap-sec} with 
some interesting similar results, obtained recently in~\cite{Mironchenko2019}, concerning the Lyapunov characterization of the uniform global asymptotic stability 
of a forward complete dynamical system. 
In order to make this comparison, we briefly recall some definitions and assumptions from~\cite{Mironchenko2019}. 
\begin{definition}\label{0-GAS def}
We say that a forward complete dynamical system $\Sigma=(X,\S,\phi)$ is 
\emph{uniformly globally asymptotically stable at the origin} (UGAS, for short) if there exists a function $\kappa$ of class $\mathcal{KL}$ such that the transition map $\phi$ satisfies the inequality 
\begin{equation*}
\|\phi(t, x, \s)\|\leq \kappa(\|x\|,t),  \quad \forall~t\geq 0,\; \forall~x\in X,\; \forall~\s\in\S.
\end{equation*}
\end{definition}
The notion of \emph{robust forward completeness} of system $\Sigma$ is given by the following. 
\begin{definition}
The forward complete dynamical system $\Sigma=(X,\S,\phi)$ is said to be \emph{robustly forward complete} (RFC, for short) if for any $C>0$ and any $\tau>0$
it holds that 
\begin{equation*}\label{RFC}
\sup_{\|x\|\leq C, t\in [0,\tau], \s\in \S}\|\phi(t,x,\s)\|<\infty. 
\end{equation*}
\end{definition}
Notice that RFC property of $\Sigma$ is equivalent to inequality
\eqref{exp-bounded-bis}, although it does not necessarily imply that $\limsup_{r\downarrow 0}\frac{\beta(r)}{r}$ is finite.

The notion of \emph{robust equilibrium point}, which 
may be seen as a form of weak stability at the origin,
is given as follows.
\begin{definition}\label{REP}
We say that $0\in X$ is a \emph{robust equilibrium point} (REP, for short) of the forward complete dynamical system $\Sigma=(X,\S,\phi)$ if
for every $\varepsilon, h>0$,  there exists $\delta=\delta(\varepsilon,h)>0$, so that 
\begin{equation*}
\|x\|\leq \delta\implies \|\phi(t,x,\s)\|\leq \varepsilon,\quad \forall t\in [0,h], \forall  \s\in \S.
\end{equation*}
\end{definition}
One of the main results
obtained in~\cite{Mironchenko2019} relates the UGAS property with 
the existence of a non-coercive Lyapunov functional, i.e., a continuous function $V:X\to\R_+$ satisfying $V(0)=0$ 
and the two inequalities
\begin{equation*}\label{norm comparison2-GAS}
0< V(x)\leq \alpha_1(\|x\|), \quad \forall~x\in X\backslash\{0\}, 
\end{equation*}
and 
\begin{equation*}\label{dini9-GAS}
\overline{D}_{\s}V(x)\leq -\alpha_2(\|x\|), \quad \forall~x\in X,\; \forall~\s\in\S,
\end{equation*}
where $\alpha_1\in\mathcal{K}_{\infty}$ and 
$\alpha_2\in \mathcal{K}$.
This is formulated by the following theorem.
\begin{thm}[\cite{Mironchenko2019}]\label{Miro19}
Consider a forward complete dynamical system $\Sigma=(X,\S,\phi)$ and assume that $\Sigma$ is robustly forward complete and that
$0$ is a robust equilibrium point of $\Sigma$. If 
$\Sigma$ admits a  non-coercive Lyapunov functional,  
then it is UGAS.  
\end{thm}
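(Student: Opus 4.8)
The plan is to prove UGAS by establishing its two constituents separately — a uniform global stability estimate and a uniform attractivity property — and then assembling them into a single $\mathcal{KL}$-bound, with uniform stability being the delicate ingredient in the non-coercive setting. The first step is to pass from the infinitesimal (Dini) decay condition to an integral one. Since $V$ is continuous and $t\mapsto\phi(t,x,\s)$ is continuous (item iii) of Definition~\ref{FC}), the scalar map $g(t):=V(\phi(t,x,\s))$ is continuous. Using the cocycle identity (item iv) of Definition~\ref{FC}) together with the shift-invariance of $\S$ (item a)), the upper right Dini derivative of $g$ satisfies $D^{+}g(t)\le\overline{D}_{\mathbb{T}_t\s}V(\phi(t,x,\s))\le-\alpha_2(\|\phi(t,x,\s)\|)\le0$. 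A continuous function with nonpositive Dini derivative is nonincreasing, and a standard comparison then yields
\begin{equation*}
V(\phi(t,x,\s))+\int_0^t\alpha_2(\|\phi(s,x,\s)\|)\,ds\le V(x)\le\alpha_1(\|x\|),\qquad\forall\,t\ge0,
\end{equation*}
so that $\int_0^{+\infty}\alpha_2(\|\phi(s,x,\s)\|)\,ds\le\alpha_1(\|x\|)<\infty$.

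From this integrability I would read off a uniform attractivity (uniform limit) property. If $\|x\|\le r$ and $\|\phi(s,x,\s)\|>\varepsilon$ held on an interval $[0,T]$, then $T\,\alpha_2(\varepsilon)\le\alpha_1(r)$; hence, setting $\tau(r,\varepsilon):=\alpha_1(r)/\alpha_2(\varepsilon)$, every trajectory issued from $B_X(0,r)$ must enter $B_X(0,\varepsilon)$ at some time $t^\ast\le\tau(r,\varepsilon)$, uniformly with respect to $\s\in\S$. Combined with the monotonicity of $g$, this gives that once the trajectory reaches $B_X(0,\varepsilon)$ one has $V(\phi(t,x,\s))\le\alpha_1(\varepsilon)$ for all $t\ge t^\ast$, so the level of $V$ can be driven down to any prescribed value within a controlled time.

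The hard part will be the uniform global stability estimate, because non-coercivity severs the link between smallness of $V$ and smallness of the state: the monotone decay of $V$ alone does not bound $\|\phi(t,x,\s)\|$ from above. This is exactly the gap filled by the RFC and REP hypotheses. RFC supplies, for each $r,T>0$, a finite constant $\mu(r,T):=\sup\{\|\phi(t,x,\s)\|:\|x\|\le r,\ t\le T,\ \s\in\S\}$ bounding excursions over bounded horizons, while REP supplies bounded-horizon smallness: states started small remain small for a fixed time. My plan is to combine these with the monotonicity of $V$ in a telescoping construction. Using the uniform hitting times $\tau(\cdot,\cdot)$ at a geometrically decreasing sequence of target levels, one controls the finitely many excursions between consecutive hitting times (each of length at most some $\tau$, hence bounded by the corresponding $\mu$) and invokes REP to prevent re-escape after the state has become small; iterating produces a function $\sigma\in\mathcal{K}$ with $\|\phi(t,x,\s)\|\le\sigma(\|x\|)$ for all $t\ge0$ and, simultaneously, the decay-to-zero rate needed for attractivity. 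Equivalently, I would reduce to the abstract equivalence in the Mironchenko--Wirth framework of \cite{Mironchenko2019,MIRONCHENKO2018} that, for a forward complete system, UGAS is characterized by uniform global stability together with uniform attractivity, and verify that the integral estimate above supplies both sub-properties. Merging the stability bound $\sigma$ with the uniform convergence rate yields a single class-$\mathcal{KL}$ estimate, which is precisely the UGAS inequality of Definition~\ref{0-GAS def}; the crux throughout is the uniform stability step, which is why RFC and REP are indispensable.
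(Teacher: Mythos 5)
The paper does not prove this statement: Theorem~\ref{Miro19} is quoted from~\cite{Mironchenko2019} purely for comparison with the UGES results of Section~\ref{main results sec}, so there is no in-paper argument to measure your proposal against. Judged on its own terms, your outline follows the strategy of the cited reference. Your first two steps are sound: the identity $D^{+}g(t)=\overline{D}_{\mathbb{T}_{t}\s}V(\phi(t,x,\s))$ via the cocycle property and shift-invariance, the resulting integral bound $\int_0^{\infty}\alpha_2(\|\phi(s,x,\s)\|)\,ds\le\alpha_1(\|x\|)$, and the hitting-time estimate $\tau(r,\varepsilon)=\alpha_1(r)/\alpha_2(\varepsilon)$ together deliver exactly \emph{uniform global weak attractivity} (every trajectory from $B_X(0,r)$ visits $B_X(0,\varepsilon)$ by a time depending only on $r$ and $\varepsilon$). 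The theorem then follows from the characterization, established in \cite{MIRONCHENKO2018,Mironchenko2019}, that RFC together with REP and uniform global weak attractivity is equivalent to UGAS.

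Two points need repair. First, your closing sentence proposes to ``verify that the integral estimate above supplies both sub-properties'' (uniform global stability and uniform attractivity); it cannot supply the first. Precisely because $V$ is non-coercive, driving $V$ down says nothing about $\|\phi(t,x,\s)\|$, and \cite[Example~6.1]{Mironchenko2019} (recalled in Section~\ref{sec: discussion}) exhibits a non-coercive Lyapunov functional for a system that is not even RFC, let alone uniformly stable. The correct reduction is to \emph{weak} attractivity only, with stability manufactured from RFC and REP. Second, the ``telescoping construction'' that is supposed to do that manufacturing is only gestured at, and it is the entire content of the theorem: one must fix radii $\varepsilon_k\downarrow 0$, use REP to choose $\delta_k$ so that trajectories issued from $B_X(0,\delta_k)$ remain in $B_X(0,\varepsilon_k)$ over a horizon $h_k\ge\tau(\delta_k,\delta_{k+1})$, use weak attractivity (applied to the shifted disturbance $\mathbb{T}_{t}\s\in\S$) to guarantee the hit of $B_X(0,\delta_{k+1})$ within that horizon, use RFC only for the initial excursion on $[0,\tau(r,\delta_1)]$, and then assemble the resulting bounds into a single $\mathcal{KL}$ function. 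As written, your argument is a correct plan that defers the genuinely hard step either to an induction you do not carry out or to an external equivalence theorem you misstate; whichever route you choose, that step must be made precise for the proof to stand.
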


The existence of a non-coercive 
Lyapunov functional $V$ is not sufficient in order to get the uniform global asymptotic stability of system $\Sigma$. Indeed, as shown in~\cite[Example 6.1]{Mironchenko2019}, the existence of such a $V$ does not imply either the RFC or REP property, hence the necessity of both 
assumptions.
Even in the linear case, an RFC like condition is required (see~\cite[Remark 4]{Hante2011}).

Let us compare Theorem~\ref{Miro19} with our corresponding result for UGES. As we already noticed, RFC is equivalent to \eqref{exp-bounded-bis}, and in particular it is ensured by the stronger condition \eqref{exp-bound cor}. 
Moreover, it is easy to check that  \eqref{exp-bound cor} also implies the REP property.
 Thus, the hypotheses of Theorem~\ref{ULES-SG} imply those of Theorem~\ref{Miro19}. As a counterpart, a stronger stability property is obtained  (namely, UGES). Also notice that our theorem 
relaxes the requirements on the functional $V$, since the lower Dini derivatives, instead of the upper one, is used and discontinuities of $V$ along the trajectories of $\Sigma$ are allowed.


\section{Applications}\label{sec: applications}
\subsection{Nonlinear retarded systems with piecewise constant delays}
Let $r\geq 0$ and set $\C=\C([-r,0],\mathbb{R}^n)$, the set of continuous functions from $[-r,0]$ to $\R^n$. 
Consider the nonlinear retarded system 
\begin{equation}
\begin{array}{ll}
\dot x(t)=f_{\s(t)}(x_t), & \quad t\geq 0,\label{delay1}\\
x(\theta)=\varphi(\theta), &\quad \theta\in [-r,0],
\end{array}
\end{equation}
where $x(t)\in \R^n$, $\varphi\in \C$, $x_t:[-r,0]\rightarrow\R^n$
is the standard notation for the history function defined by 
\begin{equation*}
x_t(\theta)=x(t+\theta), \quad -r\leq\theta\leq0,
\end{equation*}
$\s:\R_+\to \mathcal{Q}$ is a piecewise constant function, and $f_{q}:\C\to \R^n$ is a continuous functional such that 
$f_{q}(0)=0$ for all $q\in \mathcal{Q}$.

For every $q\in \mathcal{Q}$ and $\varphi\in \C$, we assume that
there exists a unique solution $x$ over $[-r,+\infty)$ of~\eqref{delay1} with $\s(t)=q$ for every $t\geq 0$.  
This defines a family $(T_{q}(t))_{t\geq 0}$ of nonlinear maps from $\C$ into itself by setting 
\begin{equation*}\label{delay-free0}
T_{q}(t)\varphi =x_t, 
\end{equation*}
for $t\geq 0$. 
According to \cite{Hale,WEBB1974}, $(T_{q}(t))_{t\geq 0}$ is a strongly continuous semigroup 
of nonlinear operators on $\C$. 
We denote by $\Sigma_r=(\C,\mathrm{PC},\phi)$ the corresponding forward complete dynamical system constructed as in Example~\ref{PC}.

As a consequence of the switching representation of the nonlinear time-varying delay system~\eqref{delay1}, the results of the previous section (in particular Theorem~\ref{ULES-SG} and Corollary~\ref{main}) apply to system $\Sigma_r$. Let us explicitly provide an application of Corollary~\ref{main}.

\begin{thm} 
Let $L>0$ be such that 
\begin{equation}\label{lip}
|f_{q}(\psi_1)-f_{q}(\psi_2)|\leq L\|\psi_1-\psi_2\|, \ \forall~\psi_1, \psi_2\in \C,\;\forall~q\in\Q.
\end{equation}
The following statements are equivalent:
\begin{itemize}
\item [i)] System $\Sigma_r$ is UGES;
\item [ii)] there exists a continuous functional $V: X\to \R_+$ and positive reals $p$, $\underline c$, and $\overline c$ such that 
\begin{equation*} 
\underline c\|\psi\|^{p}\leq V(\psi)\leq \overline c\|\psi\|^{p}, \quad \forall~\psi\in \C,
\end{equation*}
and 
\begin{equation}\label{dini-delay}
\overline{D}_{q}V(\psi)\leq -\|\psi\|^{p}, \quad \forall~\psi\in \C,\; \forall~q\in \Q;
\end{equation}
\item [iii)] there exist a functional $V: \C\to \R_+$ and positive reals $p$ and $c$ such that, for every $\psi\in \C$ and $q\in \Q$, the map $t\mapsto V(T_{q}(\cdot)\psi)$ is continuous from the left,  inequality~\eqref{dini-delay} is satisfied, and 
\begin{equation*} 
V(\psi)\leq c\|\psi\|^{p}, \quad \forall~\psi\in \C.
\end{equation*} 
\end{itemize}  
\end{thm}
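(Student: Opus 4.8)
The plan is to realize this theorem as a direct application of Corollary~\ref{main} to the forward complete dynamical system $\Sigma_r=(\C,\mathrm{PC},\phi)$: once the two structural hypotheses of that corollary are checked, its conclusion is exactly the claimed equivalence of i), ii), iii), after one identifies $\overline{D}_\s V$ with $\overline{D}_q V$ through Remark~\ref{rem-dini-constant} (legitimate since $\S=\mathrm{PC}$, so that for every $\s\in\mathrm{PC}$ one has $\s\equiv q$ near $0$ for some $q\in\Q$, and $q$ ranges over all of $\Q$ as $\s$ ranges over $\mathrm{PC}$). The two hypotheses to verify are the linear growth bound \eqref{exp-bound cor} on some interval $[0,t_1]$ and the $\S$-uniform continuity of $\phi$. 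I would obtain both from the global Lipschitz assumption \eqref{lip} via Grönwall estimates, the uniformity with respect to $\s$ coming for free from the fact that $L$ does not depend on $q$.

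For the growth bound, fix $\psi\in\C$ and $\s\in\mathrm{PC}$ and write the solution of \eqref{delay1} in integral form $x(t)=\psi(0)+\int_0^t f_{\s(s)}(x_s)\,ds$. Using $f_q(0)=0$ together with \eqref{lip} gives $|f_{\s(s)}(x_s)|\le L\|x_s\|$. Introducing the running supremum $m(t)=\sup_{u\in[-r,t]}|x(u)|$, which is nondecreasing and dominates $\|x_t\|$, one is led to $m(t)\le\|\psi\|+L\int_0^t m(s)\,ds$, and Grönwall's lemma yields $\|\phi(t,\psi,\s)\|=\|x_t\|\le e^{Lt}\|\psi\|$. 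Thus \eqref{exp-bound cor} holds for any $t_1>0$ with $G_0=e^{Lt_1}$, uniformly in $\s$.

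For the $\S$-uniform continuity, I would compare the two solutions $x^1,x^2$ issued from $\psi_1,\psi_2$ under the same $\s$. Their difference satisfies $x^1(t)-x^2(t)=(\psi_1(0)-\psi_2(0))+\int_0^t\big(f_{\s(s)}(x^1_s)-f_{\s(s)}(x^2_s)\big)\,ds$, and \eqref{lip} bounds the integrand by $L\|x^1_s-x^2_s\|$. The same running-supremum argument followed by Grönwall gives $\|\phi(t,\psi_1,\s)-\phi(t,\psi_2,\s)\|\le e^{Lt}\|\psi_1-\psi_2\|$, which establishes not merely $\S$-uniform continuity but $\S$-uniform Lipschitz continuity, with $l(\bar t)=e^{L\bar t}$ independent of $\s$.

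With both hypotheses in hand, Corollary~\ref{main} applies verbatim and delivers the equivalence; in iii) the left-continuity of $t\mapsto V(T_q(\cdot)\psi)$ is part of the assumption, so nothing further needs to be checked there. The only point requiring some care --- and the main, if modest, obstacle --- is that the delay structure forces the Grönwall estimates to be carried out on the history segment $x_t\in\C$ rather than on the pointwise value $x(t)$; passing to the running supremum $m$ circumvents this, since $m$ bounds $\|x_t\|$ and its monotonicity closes the integral inequality. Because $L$ is uniform in $q$, every constant produced above is uniform in $\s$, as required by the definitions of UGES and of $\S$-uniform (Lipschitz) continuity.
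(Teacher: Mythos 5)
Your proposal is correct and follows essentially the same route as the paper: both reduce the theorem to Corollary~\ref{main} by verifying the growth bound \eqref{exp-bound cor} (with $G_0=e^{Lt_1}$, using $f_q(0)=0$) and the $\mathrm{PC}$-uniform Lipschitz continuity of $\phi$ via a Gr\"onwall estimate based on the $q$-uniform Lipschitz constant $L$. Your running-supremum treatment of the history segment is in fact slightly more careful than the paper's stated integral inequality, which omits the initial term $\|\varphi_1-\varphi_2\|$ that is clearly intended there.
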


\begin{proof}

The proof is based on Corollary~\ref{main}. In order to apply it, we have to prove that the transition map is 
$\mathrm{PC}$-uniformly continuous and that~\eqref{exp-bound cor} holds true. Using \eqref{lip}, we easily get
\begin{align*} 
\|\phi(t,\varphi_1,\sigma)-&\phi(t,\varphi_2,\sigma)\|\leq L\int_{0}^{t}{\|\phi(s,\varphi_1,\sigma)-\phi(s,\varphi_2,\sigma)\|ds}, \quad \forall~t\geq 0,   
\end{align*}
which implies 
\begin{equation*} 
\|\phi(t,\varphi_1,\sigma)-\phi(t,\varphi_2,\sigma)\|\leq e^{Lt}\|\varphi_1-\varphi_2\|, \quad \forall~t\geq 0.   
\end{equation*}
Hence, the transition map $\phi$ is $\mathrm{PC}$-uniformly Lipschitz continuous and, using the fact that $f_q(0)=0$ for every $q\in\Q$, we have that~\eqref{exp-bound cor} holds true with $G_0=e^{Lt_1}$. 
\end{proof}


\subsection{Predictor-based sampled data exponential stabilization}\label{sample}
Let $X, U$ be two Banach spaces. Consider the semilinear control system 
\begin{equation}\label{sample1}
\left\{\begin{array}{l}
\dot x(t)=Ax(t)+f_{\s(t)}(x(t),u(t)), \quad t\geq 0,\\
x(0)=x_0\in X,
\end{array}\right.
\end{equation}
where $u\in \C(\R_+,U)$ is the control input, $A$ is the infinitesimal generator of a $C_0$-group 
$(T_{t})_{t\in\R}$
of bounded linear operators  on $X$, $\s:\R_+\to \mathcal{Q}$ is a piecewise constant function, and $f_{q}:X\times U\to X$, for $q\in \mathcal{Q}$, is a Lipschitz continuous nonlinear operator, with Lipschitz constant $L_f>0$ independent of $q$, such that $f_{q}(0,0)=0$. Let $K:X\to U$ be a globally Lipschitz function with Lipschitz constant $L_K>0$, satisfying $K(0)=0$, and consider system~\eqref{sample1} in closed-loop with 
 \begin{equation}\label{feedback}
 u(t)=K(x(t)), \quad \forall~t\geq 0.
 \end{equation}
Observe that, since $A$ is the infinitesimal generator of a $C_0$-group, the following inequality holds for the corresponding induced norm: there exist $\Gamma ,\omega>0$ such that 
\begin{equation}\label{Groupe}
\|T_{t}\|\leq \Gamma e^{\omega |t|}, \quad \forall~t\in \R.  
\end{equation}




 The aim of this section is to show the usefulness of our converse theorems in the study of 
 exponential stability preservation under sampling for the semilinear control 
 switching system~\eqref{sample1}. For convenience of the reader we give the following definition. 
 
\begin{definition}
Let $0<s_1<\cdots <s_k<\cdots$ be an increasing sequence of times such that $\lim_{k\to+\infty}s_k=+\infty$. 
The instants $s_k$ are called \emph{sampling instants} 
and the quantity 
\begin{equation*}\label{sample def}
\delta=\sup_{k\geq 0}(s_{k+1}-s_k)
\end{equation*}
is called the \emph{maximal sampling time}. By \emph{predictor-based sampled data controller}
we mean a feedback $u(\cdot)$ of the type 
\begin{equation}\label{eq:PBSDC}
u(t)=K(T_{t-s_k}x(s_k)), \quad \forall~t\in [s_k,s_{k+1}),\; \forall~k\geq 0. 
\end{equation}
\end{definition}

\subsubsection{Converse Lyapunov result for the closed-loop system~\eqref{sample1}--\eqref{feedback}} 
For every $\s\in \mathrm{PC}$, there exist an increasing sequence of times $(t_k)_{k\geq 0}$
and a sequence $(q_k)_{k\geq 0}$ taking values in  $\mathcal{Q}$ such that 
$t_0=0$, $\lim_{k\to\infty}t_k=\infty$ and  $\s$ is constantly equal to $q_k$ on $[t_k,t_{k+1})$ for every $k\geq 0$.
For every $q\in \mathcal{Q}$ and $x_0\in X$, 
letting $\s(t)\equiv q$,
there exists a unique mild solution of~\eqref{sample1} over $[0,+\infty)$,  
i.e., a continuous function $x(\cdot,x_0,q)$ satisfying
\begin{align*} 
x(t,x_0&,q)=T_{t}x_0+\int_{0}^{t}T_{t-s}f_{q}\left(x(s,x_0, q),K(x(s,x_0, q))\right)ds, 
\end{align*}
for every $t\ge 0$.
 This defines a family $(T_{q}(t))_{t\geq 0}$ of nonlinear maps by setting $T_{q}(t)x_0 =x(t,x_0,q)$,
for $t\geq 0$. We denote by $\Sigma_0=(X,\mathrm{PC},\phi)$ the corresponding forward complete dynamical system constructed as in  Example~\ref{PC}.  By \cite[Corollary~1.6]{Frankowska}, system $\Sigma_0$ is $\mathrm{PC}$-uniformly Lipschitz continuous. As a consequence of Theorem~\ref{coercive1}, we have the following lemma.
\begin{lem}\label{coercive3}
Suppose that $\Sigma_0$ is USGES. Then, for every $r>0$, there exist $\underline{c}_r,\overline{c}_r>0$ and a Lipschitz continuous Lyapunov functional $V_r: X\to \R_+$ such that 
\begin{equation*} 
\underline{c}_r\|x\|\leq V_r(x)\leq \overline{c}_r\|x\|, \quad \forall~x\in B_X(0,r), 
\end{equation*}
and 
\begin{equation*} 
\overline{D}_{q}V_r(x)\leq -\|x\|, \quad \forall~x\in B_X(0,r),\; \forall~q\in\Q.
\end{equation*}
\end{lem}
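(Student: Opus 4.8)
The plan is to recognize Lemma~\ref{coercive3} as a direct specialization of Theorem~\ref{coercive1} to the system $\Sigma_0=(X,\mathrm{PC},\phi)$, so that the real work reduces to verifying the hypotheses of that theorem and then rewriting its conclusion in the $q$-indexed form used in the statement.

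First I would check the two standing hypotheses of Theorem~\ref{coercive1}. The USGES property of $\Sigma_0$ is assumed outright. For the continuity requirement, I would invoke the fact already recorded immediately before the statement, via \cite[Corollary~1.6]{Frankowska}, that the transition map $\phi$ of $\Sigma_0$ is $\mathrm{PC}$-uniformly Lipschitz continuous; since Lipschitz continuity trivially forces uniform continuity, $\phi$ is in particular $\mathrm{PC}$-uniformly continuous, so Theorem~\ref{coercive1} is applicable with $\S=\mathrm{PC}$.

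Next I would apply Theorem~\ref{coercive1}, using its ``Moreover'' clause: because $\phi$ is $\mathrm{PC}$-uniformly Lipschitz continuous, for each $r>0$ the theorem produces constants $\underline{c}_r,\overline{c}_r>0$ and a \emph{Lipschitz continuous} functional $V_r:X\to\R_+$ satisfying the sandwich bound \eqref{norm comparison2} on $B_X(0,r)$ together with the decay estimate $\overline{D}_{\s}V_r(x)\le-\|x\|$ for every $x\in B_X(0,r)$ and every $\s\in\mathrm{PC}$. The extra compatibility condition \eqref{eq:indis} furnished by Theorem~\ref{coercive1} is simply not used, since it is not part of the present claim.

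Finally, to match the stated form of the Dini inequality I would restrict the previous estimate to constant switching signals. For a fixed $q\in\Q$ the constant function $\s\equiv q$ belongs to $\mathrm{PC}$, and by Remark~\ref{rem-dini-constant} one has $\overline{D}_{\s}V_r(x)=\overline{D}_qV_r(x)$ on $\C$; hence $\overline{D}_qV_r(x)\le-\|x\|$ for all $x\in B_X(0,r)$ and all $q\in\Q$, which completes the argument. I do not expect any genuine obstacle: the proof is essentially a corollary, and the only point demanding a little care is this last passage from the $\S$-indexed to the $q$-indexed Dini derivative, which is precisely what Remark~\ref{rem-dini-constant} supplies.
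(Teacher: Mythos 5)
Your proposal is correct and matches the paper's own (implicit) argument: the paper states the lemma as a direct consequence of Theorem~\ref{coercive1}, having just noted via \cite[Corollary~1.6]{Frankowska} that $\Sigma_0$ is $\mathrm{PC}$-uniformly Lipschitz continuous, exactly as you do. The only blemish is the stray reference to ``$\C$'' in your last step (it should be $B_X(0,r)\subset X$), but the passage from the $\S$-indexed to the $q$-indexed Dini derivative via Remark~\ref{rem-dini-constant} is otherwise exactly right.
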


\subsubsection{Predictor-based sampled data feedback}
Consider the predictor-based sampled data switching control system 
\begin{equation}\label{sample2}
\left\{\begin{array}{l}
\dot x(t)=Ax(t)+f_{\s(t)}(x(t),K(T_{t-s_k}x(s_k))),\ s_{k}\leq t<s_{k+1},\\
x(0)=x_0,
\end{array}\right.
\end{equation}
where $(s_k)_{k\geq 0}$ is the increasing sequence of sampling instants. 
We will say that $x^{\Sigma}:\R_+\to X$ is a solution of system~\eqref{sample2} if $t\mapsto x^{\Sigma}(t)$ is continuous and for every $k\geq 0$ and $s_{k}\leq t<s_{k+1}$ one has 
\begin{align}
x^{\Sigma}&(t)=T_{t-s_k}x^{\Sigma}(s_k)+\int_{0}^{t-s_k}T_{t-s_k-s}f_{\s(s+s_k)}\left(x^{\Sigma}(s+s_k),K\left(T_{s}x^{\Sigma}(s_k)\right)\right)ds,\label{sample-mild}
\end{align}
that is, the restriction of $x^{\Sigma}$ on $[s_k,s_{k+1}]$ is a mild solution.   

By applying~\cite[Theorem 6.1.2]{Pazy} on every interval $[s_k,s_{k+1}]$, we deduce the following.
\begin{lem}
For every $x_0\in X$ and $\s\in\mathrm{PC}$, system~\eqref{sample2} admits a unique solution $x^{\Sigma}:\R_+\to X$.
\end{lem}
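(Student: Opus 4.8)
The plan is to proceed by induction on the sampling intervals $[s_k,s_{k+1}]$, applying on each of them the classical well-posedness result~\cite[Theorem~6.1.2]{Pazy} for semilinear evolution equations with a globally Lipschitz nonlinearity. The base point $x^{\Sigma}(0)=x_0$ is given. Assume that $x^{\Sigma}$ has been constructed as a continuous function on $[0,s_k]$ and that the value $\xi_k:=x^{\Sigma}(s_k)$ is thereby determined; I then extend it uniquely to $[s_k,s_{k+1}]$.

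On the interval $[s_k,s_{k+1}]$ the predictor-based control is the fixed input $v(t):=K(T_{t-s_k}\xi_k)$, which does not depend on the unknown solution. Since $A$ generates a $C_0$-group, $T_{t-s_k}$ is well defined for every $t\ge s_k$, and by strong continuity of the group together with the (Lipschitz, hence continuous) map $K$, the function $t\mapsto v(t)$ is continuous on $[s_k,s_{k+1}]$. Setting $F(t,y):=f_{\s(t)}(y,v(t))$, equation~\eqref{sample2} on $[s_k,s_{k+1}]$ becomes the semilinear problem $\dot y=Ay+F(t,y)$ with initial datum $y(s_k)=\xi_k$, whose mild formulation coincides with~\eqref{sample-mild}. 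The uniform Lipschitz continuity of $f_q$ with constant $L_f$ independent of $q$ yields $\|F(t,y_1)-F(t,y_2)\|\le L_f\|y_1-y_2\|$ for every $t$ and all $y_1,y_2\in X$, so $F$ is globally Lipschitz in the state variable, uniformly in $t$.

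The only regularity issue is the time dependence of $F$ through $\s$: since $\s\in\mathrm{PC}$, the map $t\mapsto F(t,y)$ is merely piecewise continuous, being continuous on each of the finitely many subintervals of $[s_k,s_{k+1}]$ on which $\s$ is constant. This is precisely the point to handle, and I would resolve it by applying~\cite[Theorem~6.1.2]{Pazy} successively on these constancy subintervals: on each of them $F$ is continuous in $t$ and globally Lipschitz in $y$, so the theorem furnishes a unique mild solution, and these are concatenated (the terminal value on one subinterval serving as the initial value for the next) to produce a unique continuous mild solution of~\eqref{sample-mild} on the whole of $[s_k,s_{k+1}]$. This determines $\xi_{k+1}=x^{\Sigma}(s_{k+1})$ and closes the induction.

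Finally, since $s_k\to+\infty$, concatenating the solutions obtained on all the intervals $[s_k,s_{k+1}]$ yields a function $x^{\Sigma}$ defined on all of $\R_+$; continuity across the sampling instants is automatic from~\eqref{sample-mild}, since the integral term tends to $0$ and $T_{t-s_k}\xi_k\to\xi_k$ as $t\downarrow s_k$. Uniqueness on $\R_+$ follows from uniqueness on each subinterval together with the fact that the value $\xi_k$ at each sampling instant is uniquely propagated. The main obstacle, as noted, is organising the argument so that the piecewise-constant switching is absorbed into the piecewise-continuous time dependence covered by Pazy's theorem; the rest is routine bookkeeping of the concatenation.
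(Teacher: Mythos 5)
Your proposal is correct and follows essentially the same route as the paper, which simply invokes \cite[Theorem~6.1.2]{Pazy} on each interval $[s_k,s_{k+1}]$ and concatenates. Your additional refinement of subdividing each sampling interval along the (finitely many) constancy intervals of $\s$, so that the time dependence of the nonlinearity is continuous where Pazy's theorem is applied, is a detail the paper leaves implicit but does not change the argument.
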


For $x_0\in X$ and $\s\in \mathrm{PC}$, following the same reasoning as before, we identify the dynamics of system~\eqref{sample2} with the transition map
\begin{equation*} 
\phi(t,x_0,\s)=x^{\Sigma}(t), \quad t\geq 0,
\end{equation*}
where $x^{\Sigma}$ is given by~\eqref{sample-mild}, and we denote by $\Sigma=(X,\mathrm{PC},\phi)$ the corresponding forward complete dynamical system.  Notice that $\Sigma$ depends on the sequence of sampling instants $(s_k)_{k\in\mathbb{N}}$.

\begin{thm}\label{sampling theorem}
If system~$\Sigma_0$ is USGES and
\begin{equation}\label{eq:limMr}
\lim_{r\to \infty}\frac{r}{M(r)}=+\infty,
\end{equation}
where $M(r)$ is as in \eqref{sges-def},
then for every $r>0$ there exists 
$\delta^{\star}(r)>0$ such that $\Sigma$ is 
UES in $B_{X}(0,r)$, provided that the maximal sampling time of $(s_k)_{k\in\mathbb{N}}$
is smaller than  $\delta^{\star}(r)$.
\end{thm}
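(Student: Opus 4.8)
The plan is to transport the converse Lyapunov functional available for the ideal closed-loop system $\Sigma_0$ onto the sampled-data system $\Sigma$, exploiting that the predictor feedback \eqref{eq:PBSDC} coincides with the continuous feedback $u=K(x)$ at every sampling instant and deviates from it, inside each sampling interval, by an amount controlled by the sampling step. Fix $r>0$. Since $\Sigma_0$ is USGES, by Remark~\ref{remark-USGES} I may take $\lambda(\cdot)\equiv\lambda$ constant and $M(\cdot)$ nondecreasing in \eqref{sges-def}. First I would use \eqref{eq:limMr} to select a radius $R>0$ such that $(\overline c_R/\underline c_R)\,r<R$, where $\underline c_R,\overline c_R$ are the constants produced by Lemma~\ref{coercive3} at radius $R$; this is possible precisely because the construction underlying Theorem~\ref{coercive1} yields $\overline c_R/\underline c_R$ comparable to $M(R)$, so that the desired inequality follows from $R/M(R)\to+\infty$. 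Lemma~\ref{coercive3} then provides a globally Lipschitz functional $V_R$, with Lipschitz constant $\ell_R$, satisfying $\underline c_R\|x\|\le V_R(x)\le\overline c_R\|x\|$ and $\overline D_q V_R(x)\le-\|x\|$ for all $x\in B_X(0,R)$ and $q\in\Q$.

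Next I would bound the upper Dini derivative $\overline D^{\Sigma}V_R$ of $V_R$ along the sampled flow $\phi$. Let $x(\cdot)=\phi(\cdot,x_0,\s)$, fix $t\in[s_k,s_{k+1})$, set $q=\s(t)$, $y=x(s_k)$, and denote by $\phi_0$ the transition map of $\Sigma_0$. Splitting
\begin{equation*}
V_R(\phi(t+h,x_0,\s))-V_R(x(t))=\bigl[V_R(\phi(t+h,x_0,\s))-V_R(\phi_0(h,x(t),q))\bigr]+\bigl[V_R(\phi_0(h,x(t),q))-V_R(x(t))\bigr],
\end{equation*}
the second bracket contributes $\overline D_q V_R(x(t))\le-\|x(t)\|$, while the first is at most $\ell_R\|\phi(t+h,x_0,\s)-\phi_0(h,x(t),q)\|$. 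Writing both flows through the mild formula \eqref{sample-mild}, subtracting, and using \eqref{Groupe} together with the Lipschitz continuity of $f_q$ and $K$, the difference vanishes at $h=0$ and
\begin{equation*}
\limsup_{h\downarrow0}\frac{\|\phi(t+h,x_0,\s)-\phi_0(h,x(t),q)\|}{h}\le L_f L_K\,\|T_{t-s_k}y-x(t)\|.
\end{equation*}
The predictor error $\|T_{t-s_k}y-x(t)\|$ is $0$ at $t=s_k$ and, by a Gr\"onwall estimate over $[s_k,t]$ (of length $<\delta$), is bounded by $C(R)\,\delta\,\|x(t)\|$ with $C(R)$ uniform for $\delta\le1$; crucially this bound is homogeneous of degree one in the state norm because $f_q(0,0)=0$ and $K(0)=0$. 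Collecting the two contributions yields, as long as $x(t)\in B_X(0,R)$,
\begin{equation*}
\overline D^{\Sigma}V_R(x(t))\le-\bigl(1-\ell_R C(R)\,\delta\bigr)\|x(t)\|.
\end{equation*}

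Finally I would combine invariance and decay. Choose $\delta^{\star}(r)>0$ so that $\ell_R C(R)\,\delta<\tfrac12$ whenever $\delta<\delta^{\star}(r)$; then $\overline D^{\Sigma}V_R(x(t))\le-\tfrac12\|x(t)\|\le-\tfrac1{2\overline c_R}V_R(x(t))$ for every $t$ with $x(t)\in B_X(0,R)$. For $x_0\in B_X(0,r)$ one has $V_R(x_0)\le\overline c_R r$; a bootstrap argument on $\tau^{\star}=\sup\{T:\|x(t)\|<R\ \forall t\le T\}$ shows the trajectory never leaves $B_X(0,R)$, since while it stays inside $V_R$ is nonincreasing, forcing $\|x(t)\|\le V_R(x(t))/\underline c_R\le(\overline c_R/\underline c_R)\,r<R$, a strict bound incompatible with reaching the boundary. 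Hence the differential inequality holds for all $t\ge0$, and since $t\mapsto V_R(x(t))$ is continuous ($V_R$ Lipschitz, $x(\cdot)$ continuous) I would integrate it to get $V_R(x(t))\le V_R(x_0)e^{-t/(2\overline c_R)}$, whence $\|x(t)\|\le(\overline c_R/\underline c_R)\|x_0\|e^{-t/(2\overline c_R)}$ for all $x_0\in B_X(0,r)$ and $\s\in\mathrm{PC}$, which is exactly UES in $B_X(0,r)$. \textbf{The main obstacle} is to make the error term in the Dini estimate proportional to $\|x(t)\|$ rather than to the ambient ball radius: only a norm-proportional error forces the Lyapunov inequality to drive the state to the origin (true exponential stability) instead of to a residual ball (mere practical stability). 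This is what compels the systematic use of the homogeneity coming from $f_q(0,0)=0$ and $K(0)=0$, and it is the reason hypothesis \eqref{eq:limMr} enters, since it is precisely what makes the sublevel-set trapping close up.
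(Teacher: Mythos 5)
Your proof follows the paper's argument essentially step for step: the same converse Lyapunov functional from Lemma~\ref{coercive3}, the same splitting of the Dini derivative of $V_R$ through the nominal flow $\phi_0$, the same Gr\"onwall bound on the predictor mismatch, the same bootstrap giving invariance of $B_X(0,R)$, and the same use of \eqref{eq:limMr} through the ratio $\overline{c}_R/\underline{c}_R=M(R)$ coming from the construction in Theorem~\ref{coercive1}. The only step you compress is the passage from the natural Gr\"onwall bound $\varepsilon(t-s_k)\leq c\delta\|x(s_k)\|$ to one proportional to $\|x(t)\|$, which the paper obtains by using that $(T_t)$ is a group (so $\|x(s_k)\|\leq \Gamma e^{\omega\delta}\|T_{t-s_k}x(s_k)\|$) together with the triangle inequality and absorption of the $\varepsilon$ term for $\delta$ small.
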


When $\Sigma_0$ is uniformly globally exponentially stable, the associated Lyapunov functional is globally uniformly Lipschitz, and by consequence (see the proof of Theorem~\ref{sampling theorem}), the following corollary 
holds true.

\begin{cor}\label{sampling theorem-UGES}
Suppose that system~$\Sigma_0$ is UGES.
Then there exists $\delta^{\star}>0$ such that 
system~$\Sigma$ is UGES provided that the maximal sampling time of $(s_k)_{k\in\mathbb{N}}$
is smaller than  $\delta^{\star}$.
\end{cor}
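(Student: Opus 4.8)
The plan is to rerun the argument of Theorem~\ref{sampling theorem}, but to replace the radius-dependent family of local Lyapunov functionals by a single global one, so that every constant entering the estimates becomes independent of the ball radius and the threshold on the sampling time becomes uniform. First I would record that, since $\Sigma_0$ is UGES, the constants $M(\cdot),\lambda(\cdot)$ in~\eqref{sges-def} may be taken constant in $r$; in particular~\eqref{eq:limMr} holds trivially. By Corollary~\ref{main} (equivalently, by Theorem~\ref{coercive1} applied with radius-independent data and the $\mathrm{PC}$-uniform Lipschitz continuity of $\Sigma_0$ noted before Lemma~\ref{coercive3}), there exists a single functional $V:X\to\R_+$, globally Lipschitz with some constant $L_V$, and positive reals $\underline c,\overline c$ such that $\underline c\|x\|\le V(x)\le\overline c\|x\|$ and $\overline{D}_q V(x)\le-\|x\|$ for all $x\in X$ and $q\in\Q$. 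Crucially, $L_V,\underline c,\overline c$ do not depend on any ball.

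Next I would compare, on a single sampling interval $[s_k,s_{k+1}]$ of length $h_k:=s_{k+1}-s_k\le\delta$, the sampled-data trajectory $\psi$ of $\Sigma$ issued from $\psi(s_k)=x$ with the continuous-feedback trajectory $y$ of $\Sigma_0$ issued from the same $x$ under the same switching law. Using the mild formulation~\eqref{sample-mild}, the group bound~\eqref{Groupe}, the Lipschitz constants $L_f,L_K$, and the normalizations $f_q(0,0)=0$, $K(0)=0$, I would first bound the predictor error by $\|T_{t-s_k}x-\psi(t)\|\le c_1 h_k\|x\|$ and then, subtracting the two mild equations and invoking Gronwall's inequality, obtain the second-order deviation estimate $\|\psi(t)-y(t)\|\le c_2 h_k^2\|x\|$ on $[s_k,s_{k+1}]$, with $c_1,c_2$ depending only on $L_f,L_K,\Gamma,\omega$ and a fixed upper bound on $\delta$, hence uniform in $k$, in $x$, and in the switching signal.

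I would then turn these estimates into a one-step contraction of $V$. Since $\overline{D}_q V(x)\le-\|x\|\le-V(x)/\overline c$ along $\Sigma_0$, Gronwall gives $V(y(s_{k+1}))\le V(x)\,e^{-h_k/\overline c}$, and the Lipschitz bound together with the deviation estimate yields
\[
V(\psi(s_{k+1}))\le V(y(s_{k+1}))+L_V c_2 h_k^2\|x\|\le V(x)\Bigl(e^{-h_k/\overline c}+\tfrac{L_V c_2}{\underline c}\,h_k^2\Bigr).
\]
A second-order expansion of the exponential shows that, for $\delta$ smaller than some $\delta^\star$ fixed once and for all, the factor in parentheses is bounded by $e^{-\beta h_k}$ for a constant $\beta>0$ independent of $k$ and $x$, so that $V(\psi(s_{k+1}))\le V(\psi(s_k))\,e^{-\beta h_k}$.

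Finally I would telescope over the sampling instants, obtaining $V(\psi(s_k))\le V(x_0)\,e^{-\beta s_k}$, and use the within-interval bound from the second step (which controls $V(\psi(t))$ by a constant multiple of $V(\psi(s_k))$ for $t\in[s_k,s_{k+1}]$) to conclude $V(\psi(t))\le C\,V(x_0)\,e^{-\beta t}$ for all $t\ge0$. The two-sided bound $\underline c\|x\|\le V(x)\le\overline c\|x\|$ then gives $\|\psi(t)\|\le (C\overline c/\underline c)\,e^{-\beta t}\|x_0\|$, i.e.\ UGES of $\Sigma$, with the threshold $\delta^\star$ inherited from the third step and therefore uniform. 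The main obstacle is the deviation estimate of the second step: one must show that the gap between the sampled and the continuous trajectories is of order $h_k^2\|x\|$ globally on $X$ and uniformly over $\mathrm{PC}$, and it is precisely this quadratic order in the step length that lets the small sampling correction survive against the merely linear decay rate of $V$ and produce a genuine multiplicative contraction.
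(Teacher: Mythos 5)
Your proof is correct, but it organizes the perturbation analysis differently from the paper. The paper does not perform a step-by-step comparison over whole sampling intervals: it bounds the upper Dini derivative of the (single, globally Lipschitz) Lyapunov functional $V$ along the sampled trajectory $x^{\Sigma}$ directly, by comparing $x^{\Sigma}(t+h)$ with the $\Sigma_0$-flow issued from the \emph{current} point $x^{\Sigma}(t)$ as $h\downarrow 0$. The resulting extra term is proportional to the predictor mismatch $\|x^{\Sigma}(t)-T_{t-s_k}x^{\Sigma}(s_k)\|$, which Gronwall shows to be $O(\delta)\|x^{\Sigma}(t)\|$; since the decay term $-\|x^{\Sigma}(t)\|$ is of order one per unit time, first-order smallness in $\delta$ already yields $\overline{D}_{\s}V(x^{\Sigma}(t))\leq (c^{\star}-1)\|x^{\Sigma}(t)\|$ with $c^{\star}<1$, a differential inequality valid for all $t$, from which UGES follows at once. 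You instead derive a one-step contraction $V(\psi(s_{k+1}))\leq e^{-\beta h_k}V(\psi(s_k))$ by comparing the two trajectories over the entire interval, which forces you to prove the sharper quadratic deviation bound $\|\psi(t)-y(t)\|\leq c_2 h_k^2\|x\|$ (correctly identified as the crux; it does hold, because the integrand in the mild-solution difference contains the predictor error, itself $O(s\|x\|)$, so one integration supplies the extra power of $h_k$) and then to add a separate within-interval bound before telescoping. Both routes rest on the same two pillars, namely the radius-independent Lipschitz Lyapunov functional supplied by Theorem~\ref{coercive1} in the UGES case and a Gronwall estimate on the mild solutions, but the paper's infinitesimal comparison is slightly more economical, needing only first-order accuracy in $\delta$ and no discrete-time bookkeeping, while yours makes the mechanism (quadratic local error versus linear decay per step) explicit in the style of discrete-time Lyapunov analysis and would adapt naturally to settings where only the sampled values of the state are analyzed.
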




\subsection{Link between uniform global exponential stability and uniform input-to-state stability}
Let $X$ and $U$ be two Banach spaces and consider the control system~\eqref{sample1}, where $A$ is the infinitesimal generator of a $C_0$-semigroup of bounded linear operators $(T_{t})_{t\geq 0}$ 
on $X$, $\s\in \mathrm{PC}$, and $f_{q}:X\times U\to X$, for $q\in \mathcal{Q}$, is a Lipschitz continuous nonlinear operator, with Lipschitz constant $L_f>0$ independent of $q$, such that $f_{q}(0,0)=0$. 
We assume here that the set of admissible controls is $L^p(U):=L^p(\R_+,U)$ with $1\leq p\leq +\infty$. Following the reasoning in Section~\ref{sample} we can define (see, e.g., \cite{Frankowska}), for every $x_0\in X$, $\s\in \mathrm{PC}$, and $u\in L^p(U)$, the corresponding trajectory $
\phi_u(t,x_0,\s)$ on $\R_+$,  which is 
absolutely continuous 
with respect to $t$ and continuous with respect to $(x_0,u)\in X\times L^p(U)$. 

We next provide a result of ISS type in the same spirit as  those obtained in~\cite{JMPW19, MIRONCHENKO201864}. In our particular context (UGES and global Lipschitz assumption) we are able to prove that the input-to-state map $u\mapsto \phi_u(\cdot,0,\s)$ has finite gain. 

\begin{thm}\label{iss theorem}
Assume that the forward complete dynamical system $(X,
\mathrm{PC},\phi_0)$ is UGES. Then for every $1\leq p\leq +\infty$ and $\s\in \mathrm{PC}$, the 
input-to-state map $u\mapsto \phi_u(\cdot,0,\s)$ is well defined as a map from $L^p(U)$ to $L^p(X)$ and has a finite $L^p$-gain independent of $\s$, i.e., 
there exists $c_p>0$ such that 
\begin{equation}\label{lp1}
\|\phi_u(\cdot,0,\s)\|_{L^p(X)}\leq c_p\|u\|_{L^p(U)}, \quad \forall~u\in L^p(U),\; \forall~\s\in\mathrm{PC}.
\end{equation}
\end{thm}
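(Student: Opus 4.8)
The plan is to exploit the converse Lyapunov theorem just established. Since $(X,\mathrm{PC},\phi_0)$ is UGES, it is in particular USGES, and its transition map $\phi_0$ is $\mathrm{PC}$-uniformly Lipschitz continuous: the flow of the unforced semilinear equation $\dot x=Ax+f_{\s}(x,0)$ is globally Lipschitz by a Gronwall estimate applied to its mild formulation (cf.~\cite{Frankowska}), using that $f$ is globally Lipschitz. Hence Theorem~\ref{coercive1} applies, and since for a UGES system the constants $\lambda(\cdot)$ and $M(\cdot)$ of \eqref{sges-def} can be taken constant, the functionals furnished by \eqref{eq:indis} all coincide. This produces a \emph{single} globally Lipschitz functional $V:X\to\R_+$, with Lipschitz constant $L_V$, together with constants $\underline c,\overline c>0$ such that $\underline c\|x\|\le V(x)\le\overline c\|x\|$ and $\overline{D}_{\s}V(x)\le -\|x\|$ for every $x\in X$ and every $\s\in\mathrm{PC}$; note that $V(0)=0$.

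Fix $\s\in\mathrm{PC}$, $u\in L^p(U)$, and write $x(\cdot)=\phi_u(\cdot,0,\s)$. The heart of the argument is to bound the upper right Dini derivative $\overline{D}^{+}(V\circ x)(t):=\limsup_{h\downarrow0}\frac1h\bigl(V(x(t+h))-V(x(t))\bigr)$ of $V$ \emph{along the forced trajectory}. To this end I compare, over a short horizon $h>0$, the forced solution with the unforced flow issued from $x(t)$: setting $y(h)=\phi_0(h,x(t),\mathbb{T}_t\s)$, subtraction of the two mild formulations gives
\begin{equation*}
x(t+h)-y(h)=\int_0^h T_{h-\tau}\bigl(f_{\s(t+\tau)}(x(t+\tau),u(t+\tau))-f_{\s(t+\tau)}(y(\tau),0)\bigr)\,d\tau.
\end{equation*}
Inserting $f_{\s(t+\tau)}(x(t+\tau),0)$, using the global Lipschitz continuity of $f$ together with the standard semigroup bound $\|T_s\|\le\Gamma e^{\omega s}$ ($s\ge0$), and applying Gronwall's inequality, one obtains $\|x(t+h)-y(h)\|\le C(h)\,L_f\int_t^{t+h}\|u(s)\|\,ds$ with $C(h)\to\Gamma$ as $h\downarrow0$. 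Since $\|u(\cdot)\|\in L^1_{\mathrm{loc}}$, at every Lebesgue point $t$ of $\|u(\cdot)\|$ (hence for a.e.\ $t$) this yields $\limsup_{h\downarrow0}\|x(t+h)-y(h)\|/h\le\Gamma L_f\|u(t)\|$. Combining the Lipschitz estimate $|V(x(t+h))-V(y(h))|\le L_V\|x(t+h)-y(h)\|$ with $\overline{D}_{\mathbb{T}_t\s}V(x(t))\le-\|x(t)\|$ and subadditivity of $\limsup$ gives, for a.e.\ $t\ge0$,
\begin{equation*}
\overline{D}^{+}(V\circ x)(t)\le -\|x(t)\|+\Gamma L_f L_V\,\|u(t)\|\le -\tfrac{1}{\overline c}\,V(x(t))+\Gamma L_f L_V\,\|u(t)\|,
\end{equation*}
where the last step uses $\|x(t)\|\ge V(x(t))/\overline c$.

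Set $w(t)=V(x(t))$. Being the composition of the Lipschitz functional $V$ with the absolutely continuous trajectory $x(\cdot)$, $w$ is absolutely continuous, with $w(0)=V(0)=0$; thus $\dot w(t)\le-w(t)/\overline c+\Gamma L_f L_V\|u(t)\|$ for a.e.\ $t$. Integrating via the integrating factor $e^{t/\overline c}$ gives $w(t)\le\Gamma L_f L_V\int_0^t e^{-(t-s)/\overline c}\|u(s)\|\,ds$. Using $\|x(t)\|\le V(x(t))/\underline c=w(t)/\underline c$, I get $\|x(t)\|\le\frac{\Gamma L_f L_V}{\underline c}\,(g*\|u(\cdot)\|)(t)$, where $g(\tau)=e^{-\tau/\overline c}$ on $\R_+$ satisfies $\|g\|_{L^1(\R_+)}=\overline c$. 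Young's convolution inequality, valid for every $1\le p\le\infty$, then yields $\|x\|_{L^p(X)}\le\frac{\Gamma L_f L_V\,\overline c}{\underline c}\,\|u\|_{L^p(U)}$, which is exactly \eqref{lp1} with $c_p=\Gamma L_f L_V\overline c/\underline c$, a constant independent of $\s$ (and in fact of $p$). In particular $\phi_u(\cdot,0,\s)\in L^p(X)$, so the input-to-state map is well defined with finite gain.

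I expect the main obstacle to be the second paragraph: carrying out the comparison between the forced mild solution and the unforced nonlinear flow over a vanishing horizon, and passing correctly to the limit $h\downarrow0$ when the input $u$ is only $L^p$. This is where the Gronwall estimate, the semigroup growth bound, and the Lebesgue differentiation argument must be combined to produce the clean Dini-derivative inequality that drives the remainder of the proof; once this estimate is in hand, the passage to the $L^p$ gain is routine.
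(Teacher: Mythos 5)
Your proof is correct, and its first half coincides with the paper's: both construct the single globally Lipschitz coercive Lyapunov functional $V$ from Theorem~\ref{coercive1} (using \eqref{eq:indis} and the constancy of $M,\lambda$ in the UGES case), and both estimate $\overline{D}^{+}(V\circ x)(t)$ by comparing the forced mild solution with the unforced flow issued from $x(t)$ via the variation-of-constants formula and Gronwall. The routes diverge afterwards. The paper splits into cases: for $p=+\infty$ it bounds $\frac1h\int_t^{t+h}\|u\|$ crudely by $\|u\|_{L^\infty}$ and integrates $e^{t/\overline c}V(x(t))$ via \cite[Theorem 9]{Hagood}; for $1\le p<+\infty$ it first reduces by density to continuous $u$ (so that $\frac1h\int_t^{t+h}\|u\|\to\|u(t)\|$ trivially), then introduces the auxiliary functional $W_p=V^p/p$, integrates its Dini derivative, and closes with H\"older's inequality, producing a $p$-dependent gain. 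You instead obtain the pointwise dissipation inequality for a.e.\ $t$ directly via the Lebesgue differentiation theorem (no density argument), integrate it with the factor $e^{t/\overline c}$ to get the pointwise bound $\|x(t)\|\le\frac{\Gamma L_fL_V}{\underline c}\,(g*\|u\|)(t)$ with $g(\tau)=e^{-\tau/\overline c}$, and invoke Young's convolution inequality. This treats all $p\in[1,+\infty]$ in one stroke and yields a gain constant independent of $p$, which is slightly stronger than what the paper's argument gives; the one structural hypothesis you lean on is the absolute continuity of $t\mapsto\phi_u(t,0,\s)$ (asserted in the paper), whereas the paper's use of Hagood's theorem only needs continuity of $t\mapsto V(x(t))$ together with the Dini bound — if one wished to avoid absolute continuity, your differential inequality could equally be integrated by that same Hagood argument, so the difference is cosmetic rather than substantive.
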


\section{Example: Sample-data exponential stabilization of a switching 
wave equation}\label{sec-example}
Let $\Omega$ be a bounded open domain of 
class $\C^2$ in $\R^n$, and consider the switching damped wave equation 
\begin{equation}\label{wave}
\left\{\begin{array}{ll}
\frac{\partial^2\psi}{\partial t^2}-\Delta x+\rho_{\s(t)}(\frac{\partial\psi}{\partial t})=0\quad &\textnormal{in $\Omega\times\R_+$},\\
\psi=0\quad & \textnormal{on $\partial\Omega\times\R_+$},\\
\psi(0)=\psi_0, 
\psi^{\prime}(0)=\psi_1\quad & \textnormal{on $\Omega$},
\end{array}\right.
\end{equation}
where $\s:\R_+\to \mathcal{Q}$ is a piecewise constant function and $\rho_q:\R\to\R$, for $q\in \mathcal{Q}$, is a uniformly Lipschitz continuous function satisfying 
\begin{equation*}
\rho_q(0)=0, \quad  \alpha |v|\leq |\rho_q(v)|\leq \frac{|v|}{\alpha}, \quad \forall~v\in\R,\; \forall~q\in \mathcal{Q},
\end{equation*}
for some $\alpha>0$. 
In the case where $\tilde\rho(t,v):=\rho_{\s(t)}(v)$ is sufficiently regular, namely  a continuous function differentiable on $\R_+\times (-\infty,0)$ and $\R_+\times (0,\infty)$, and $v\mapsto \tilde\rho(t,v)$ is nondecreasing, for each initial condition $(\psi_0,\psi_1)$ taken in 
$H^2(\Omega)\cap H^1_0(\Omega)\times H^1_0(\Omega)$ there exists a unique strong solution for~\eqref{wave} in the class
${\bf H}=W^{2,\infty}_{\rm loc}(\R_+,L^2(\Omega))\cap W^{1,\infty}_{\rm loc}(\R_+,H^1_0(\Omega))\cap L^{\infty}_{\rm loc}(\R_+,H^2(\Omega)\cap H^1_0(\Omega))
$
(see \cite{Martinez2000} for more details).
For the switching damped wave equation~\eqref{wave} the existence and uniqueness of a strong solution (in
${\bf H}$) 
is given by concatenation. Defining the energy of the solution of~\eqref{wave} by 
\begin{equation*}
E(t)=\frac{1}{2}\int_{\Omega}\left({\frac{\partial\psi}{\partial t}}^2+|\nabla \psi|^2\right)dx,
\end{equation*}
we can prove, following the same lines of the proof of~\cite[Theorem 1]{Martinez2000}, that the energy of the solutions in ${\bf H}$
decays uniformly (with respect to the initial condition) exponentially to zero as
\begin{equation*}
E(t)\leq E(0)\exp\left(1-\mu t\right), \quad \forall~t\geq 0, 
\end{equation*}
for some $\mu>0$ that depends only on $\alpha$. 
Let $X$ be the Banach space $H^1_0(\Omega)\times L^2(\Omega)$ 
endowed with the norm 
\begin{equation*}\label{norm}
\|x\|=\|\nabla x_1\|^2_{L^2(\Omega)}+\|x_2\|^2_{L^2(\Omega)},
\end{equation*}
and let $A$ be the linear operator defined on $X$ by 
\begin{align*}
D(A)=&\Big\{x=\left(\begin{array}{c}x_1\\x_2\end{array}\right)\in X \mid x_1\in H^2(\Omega)\cap H^1_0(\Omega),\\
&  \quad x_2\in H^1_0(\Omega)\Big\},\\
A=&
\left(\begin{array}{cc}0&I\\ \Delta & 0\end{array}\right), 
\end{align*}
where $I$ is the identity operator and $\Delta$ denotes the Laplace operator. 
It is well known that $D(A)$
 is dense in $X$ and that $A$ is the infinitesimal generator of a $C_0$-group of bounded linear operators 
$(T_{t})_{t\in \R}$ on $X$ satisfying 
\begin{equation*} 
\|T_{t}\|\leq \Gamma e^{|t|}, \quad \forall~t\in \R. 
\end{equation*}
With this formulation, equation~\eqref{wave} can be 
rewritten as
the initial value problem
\begin{equation}\label{wave-cauchy} 
\left\{\begin{array}{l}
\dot x(t)=Ax(t)+f_{\s(t)}(x(t),u(t)), \quad t\geq 0,\\
x(0)=x_0,
\end{array}\right.
\end{equation}
with feedback
\begin{equation*}
u(t)= x_2(t) \textnormal{ and }  f_s(x,u)=\left(\begin{array}{c}0\\ \rho_s(u)\end{array}\right).
\end{equation*}
The associated transition map satisfies the inequality
\begin{equation}\label{decay wave}
\|\phi(t,x,\s)\|\leq e^{1-\mu t}\|x\|, \quad \forall~t\geq 0.
\end{equation}
Note that the constant $\mu$ does not depend on the solution. Using the density of $D(A)$ in $X$, inequality~\eqref{decay wave} holds true for weak solutions. 
Thus system~\eqref{wave-cauchy} is UGES. The assumptions of 
Corollary~\ref{sampling theorem-UGES}
being satisfied, 
we deduce that
for a sufficiently small maximal sampling time the 
predictor-based sampled data feedback
\eqref{eq:PBSDC}
 preserves the exponential
decay to zero of the energy of the solutions of~\eqref{wave-cauchy}.

\section{Proof of the main results}\label{s:proofs}
As a preliminary step, let us state the following useful and straightforward lemma. 
\begin{lem}
Given a continuous function $\beta:\R_+\to \R_+$ satisfying $\limsup_{r\downarrow 0} \frac{\beta(r)}{r}<+\infty$,
the function $\alpha:\R_+\to \R_+$ defined by 
\begin{equation}\label{alpha-beta}
\alpha(0)=1+\limsup_{r\downarrow 0} \frac{\beta(r)}{r},\ \alpha(r)=1+\sup_{s\in (0,r]}\frac{\beta(s)}{s}, \ r>0,
\end{equation}
is a continuous nondecreasing function satisfying $\beta(r)\leq r\alpha(r)$ for every $r\geq 0$.
\end{lem}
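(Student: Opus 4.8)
The plan is to verify the three assertions---monotonicity, the pointwise bound, and continuity---separately, treating continuity as the substantive part. Throughout I write $g(s):=\beta(s)/s$ for $s>0$, so that $\alpha(r)=1+\sup_{s\in(0,r]}g(s)$ for $r>0$ and $\alpha(0)=1+\limsup_{s\downarrow 0}g(s)$; note that $g$ is continuous on $(0,+\infty)$ since $\beta$ is continuous and the denominator does not vanish there, and that $g$ is bounded near $0$ because $\limsup_{s\downarrow 0}g(s)<+\infty$, so the suprema defining $\alpha$ are finite.

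For monotonicity I would observe that $F(r):=\sup_{s\in(0,r]}g(s)$ is nondecreasing in $r>0$ because the interval $(0,r]$ grows with $r$; and for the comparison with the value at $0$ I would use the identity $\limsup_{s\downarrow 0}g(s)=\inf_{r>0}\sup_{s\in(0,r]}g(s)$, which is $\le\sup_{s\in(0,r]}g(s)$ for every $r>0$, giving $\alpha(0)\le\alpha(r)$. The pointwise bound $\beta(r)\le r\alpha(r)$ is immediate for $r>0$, since $g(r)\le\sup_{s\in(0,r]}g(s)\le\alpha(r)$ yields $\beta(r)=r\,g(r)\le r\alpha(r)$; for $r=0$ I would first note that finiteness of $\limsup_{s\downarrow 0}g(s)$ together with continuity of $\beta$ forces $\beta(0)=\lim_{s\downarrow 0}s\,g(s)=0$, so $\beta(0)=0=0\cdot\alpha(0)$.

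The core of the argument is continuity of $F$. At $r=0$ continuity is exactly the identity $\lim_{r\downarrow 0}F(r)=\inf_{r>0}F(r)=\limsup_{s\downarrow 0}g(s)=F(0)$, using that $F$ is monotone. At an interior point $r_0>0$, right-continuity follows by writing $F(r)=\max\{F(r_0),\sup_{s\in(r_0,r]}g(s)\}$ for $r>r_0$ and letting $r\downarrow r_0$: continuity of $g$ at $r_0$ makes the second term tend to $g(r_0)\le F(r_0)$, so $F(r)\to F(r_0)$. For left-continuity I would set $L:=\lim_{r\uparrow r_0}F(r)\le F(r_0)$ (the limit exists by monotonicity) and prove the reverse inequality by showing $g(s_0)\le L$ for every $s_0\in(0,r_0]$: if $s_0<r_0$ this holds because $g(s_0)\le F(r)\le L$ for all $r\in[s_0,r_0)$, while for $s_0=r_0$ I use $g(r)\le F(r)\le L$ for $r<r_0$ and let $r\uparrow r_0$, invoking continuity of $g$ to get $g(r_0)\le L$; taking the supremum over $s_0$ yields $F(r_0)\le L$, hence $L=F(r_0)$.

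The step I expect to require the most care is the left-continuity at interior points, since it is the only place where the monotone-limit value $L$ must be shown to recapture the full supremum $F(r_0)$, and this relies crucially on continuity of $g$ to handle the endpoint $s_0=r_0$. Everything else is routine bookkeeping with monotone suprema and the definition of $\limsup$ as an infimum of suprema over shrinking intervals.
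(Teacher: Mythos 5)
Your proof is correct and complete. Note that the paper itself offers no proof of this lemma---it is introduced as a ``useful and straightforward lemma'' and left unproved---so there is no argument to compare against; your verification (reducing everything to properties of the running supremum $F(r)=\sup_{s\in(0,r]}\beta(s)/s$, with the only delicate points being left-continuity at interior points via the monotone limit $L$ and the identification $F(0)=\inf_{r>0}F(r)$ at the origin, plus the observation that continuity and the finite $\limsup$ force $\beta(0)=0$) supplies exactly the routine details the authors chose to omit.
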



\subsection{Proof of Theorem~\ref{ULES}}
By using inequality~\eqref{les-def}, one deduces that i) 
implies ii) 
with $r\mapsto k(r)\equiv\frac{M}{(p\lambda)^{1/p}}$.
Moreover, ii) 
clearly implies iii). 
It then remains to prove that iii) implies i).
Without loss of generality, we assume that 
$G_0\geq 1$. Let $C
:\R_+\to \R_+$ be the nondecreasing function defined by 
\begin{equation}\label{C}
C(r)=G_0\max\left\{\alpha(r),\frac{k(r)}{t_1^{1/p}}\alpha\left(\frac{k(r)r}{t_1^{1/p}}\right)\right\},
\end{equation}
where $\alpha:\R_+\to \R_+$ is given by~\eqref{alpha-beta}.

For $t>t_1$, $x\in X$, and $\s\in\S$, we have 
\begin{align}
\|\phi(t,x,\s)\|&=\|\phi\left(\tau,\phi(t-\tau, x, \s), \mathbb{T}_{t-\tau}\s\right)\|
\leq G_0\beta\left(\|\phi(t-\tau, x, \s)\|\right), \quad \forall~\tau\in [0,t_1]. \label{bound1}
\end{align}

One deduces from equations
\eqref{exp-lp3} and \eqref{bound1} 
that, for $t>t_1$, $x\in B_X(0,R)$, and $\s\in\S$, 
\begin{align*}\label{bound2}
t_1\left(\beta^{-1}\left(\frac{\|\phi(t,x,\s)\|}{G_0}\right)\right)^{p}&\leq  \int_{t-t_1}^{t}{\|\phi(\tau,x,\s)\|^{p}d\tau}\leq  \int_{0}^{+\infty}{\|\phi(\tau,x,\s)\|^{p}d\tau}\leq k(\|x\|)^p\|x\|^p.
\end{align*}
Therefore, 
for $t>t_1$, $x\in B_X(0,R)$, and $\s\in\S$, we have 
\begin{equation}\label{betaincreasing}
\|\phi(t,x,\s)\|\leq  G_0\beta\left(\frac{k(\|x\|)\|x\|}{t_1^{1/p}}\right). 
\end{equation}
By consequence, 
bundling together \eqref{exp-bounded2} and \eqref{betaincreasing},  we get for $t\geq 0$ and $x\in B_X(0,R)$ that
\begin{align}
\|\phi(t,x,\s)\|&\leq G_0\max\left\{\beta(\|x\|),\beta\left(\frac{k(\|x\|)\|x\|}{t_1^{1/p}}\right)\right\}
\leq 
C(\|x\|)\|x\|,\label{bound3}
\end{align}
where we used that $\beta(\|x\|)\le \|x\|\alpha(\|x\|)$.

Let $r>0$ be such that $rC(r)<R$. In particular, 
for every $t\ge 0$ and $x\in B_X(0,r)$ we have $\|\phi(t,x,\s)\|<R$. 
 It follows from~
\eqref{bound3} that, for every $t\ge 0$ and $x\in B_X(0,r)$,     
\begin{align*}
t\|\phi(t,x,\s)\|^{p}&=\int_{0}^{t}{\|\phi(t-\tau,\phi(\tau,x,\s),\mathbb{T}_{\tau}\s)\|^{p}d\tau}
\leq \int_{0}^{t}{ C(r)^{p}\|\phi(\tau,x,\s)\|^{p}d\tau}
\leq \left( k(r) C(r) \right)^{p}\|x\|^{p},
\end{align*}
where the last inequality follows from \eqref{exp-lp3}.
By consequence, one has, for every $t\geq 0$, $x\in B_X(0,r)$, and $\s\in\S$
\begin{eqnarray*}
\|\phi(t,x,\s)\|\leq \frac{k(r) C\left(r\right)}{t^{1/p}}\|x\|.
\end{eqnarray*}
So, for each $0<c<1$, there exists a positive real number $t_0=t_0(c,r)$ such that 
\begin{eqnarray*}
\|\phi(t,x,\s)\|\leq c\|x\|, \quad \forall~t\geq t_0,\;\forall~x\in B_X(0,r),\;\forall~\s\in\S.
\end{eqnarray*}

Now, let $t\geq 0$, $x\in B_X(0,r)$, and $\s\in \S$ be fixed. There exists an integer $n\geq 0$ such that $t=nt_0+s$, with $0\leq s<t_0$. 
Notice that $\phi(jt_0,x,\s)\in B_X(0,r)$ for every $j\in \mathbb{N}$.
We have 
\begin{align*} 
\|\phi(t,x,\s)\|&=\|\phi(s,\phi(nt_0,x,\s),\mathbb{T}_{nt_0}\s)\| \leq  C(r)\|\phi(nt_0,x,\s)\|
\leq  C(r)c^{n}\|x\|\leq M(r)e^{-\lambda(r) t}\|x\|,
\end{align*}
with $M(r)=\frac{ C(r)}{c}$ and $\lambda(r)=-\frac{\log{(c)}}{t_0(c,r)}>0$. 
The uniform local exponential stability of system $\Sigma$ is established.      

\subsection{Proof of Theorem~\ref{USGES}} 
If $\Sigma$ is USGES, then by Remark~\ref{remark-USGES} we may assume without loss of generality that $r\mapsto \lambda(r)$ is constant and $r\mapsto M(r)$ is nondecreasing. Hence, i) 
implies ii) 
with $k(r):= \frac{M(r)}{(p\lambda(r))^{1/p}}$.
Moreover, ii) 
clearly implies iii). 
It then remains to prove that iii) 
implies i). 
For this, let $r>0$. 
Let $R>rC(r)$, where $C$ is defined by~\eqref{C}, and observe that ~\eqref{lp0} is satisfied in $B_X(0,R)$.
 Following the same 
 proof as in Theorem~\ref{ULES}, we get the existence of
$M(r)>0$ and $\lambda(r)>0$ such that 
\begin{equation*} 
\|\phi(t,x,\s)\|\leq M(r)e^{-\lambda(r) t}\|x\|, 
\end{equation*}
for all $t\geq 0$, $x\in B_{X}(0,r)$, and $\s\in\S$,
whence the uniform semi-global exponential stability of system $\Sigma$.     

\subsection{Proof of Theorem~\ref{datko}}
The proof follows the same steps as that of Theorem~\ref{USGES} with $\beta$ equal to the identity function
and $k$ equal to a constant function. In such a case, the functions $r\mapsto M(r)$ and $r\mapsto \lambda(r)$ are constant.

\subsection{Proof of Theorem~\ref{ULES-SG}}
We start by proving item i). 
Let $x\in B_X(0,R)$ and $\s\in \S$. Assume that there exists a first time $0<t^{\star}<+\infty$
such that $\|\phi(t^{\star},x,\s)\|=R$. If $t^{\star}\leq t_1$ then we have $R\leq G_0\beta(\|x\|)$
which implies that $\|x\|\geq \beta^{-1}(R/G_0)$. Then let us take $x\in B_X(0,\beta^{-1}(R/G_0))$,
so that $t^{\star}>t_1$. For every $t_1\leq t\leq t^{\star}$, by repeating the same reasoning as in~\eqref{bound1}, we have
\begin{equation}\label{bound4}
t_1\beta^{-1}\left(\frac{\|\phi(t,x,\s)\|}{G_0}\right)^{p}\leq  \int_{t-t_1}^{t}{\|\phi(\tau,x,\s)\|^{p}d\tau}. 
\end{equation}
In addition, since $t\mapsto V(\phi(t,x,\s))$ is continuous from the left, it follows from~\eqref{dini1-SG} (see~\cite[Theorem 9]{Hagood}) that  
\begin{align}
-V(x)&\leq V(\phi(t,x,\s))-V(x)\leq -\int_{0}^{t}{\|\phi(\tau,x,\s)\|^{p}d\tau}\leq -\int_{t-t_1}^{t}{\|\phi(\tau,x,\s)\|^{p}d\tau}.\label{Hagood}
\end{align}
By consequence, from inequalities~\eqref{bound4} and \eqref{Hagood} together with~\eqref{noncoe1-SG}, one gets 
\begin{equation}\label{contradiction}
t_1\beta^{-1}\left(\frac{\|\phi(t,x,\s)\|}{G_0}\right)^{p}\leq  c\|x\|^p. 
\end{equation}
Thus, evaluating~\eqref{contradiction} at $t=t^{\star}$, one gets 
\begin{equation*} 
\|x\|\geq  \left(\frac{t_1}{c}\right)^{1/p}\beta^{-1}\left(\frac{R}{G_0}\right). 
\end{equation*}
If we take $x\in B_X(0,\gamma)$ with 
\begin{equation*}
\gamma:=\min\left\{\beta^{-1}\left(\frac{R}{G_0}\right),\left(\frac{t_1}{c}\right)^{1/p}\beta^{-1}\left(\frac{R}{G_0}\right)\right\} 
\end{equation*}
we get a contradiction, that is, we then have 
\begin{equation*}
\|\phi(t,x,\s)\|\leq R, \quad \forall~t\geq 0,\; \forall~x\in B_X(0,\gamma),\;\forall~\s\in \S. 
\end{equation*}
From~\eqref{dini1-SG}, we have, for every $t\geq 0$, $x\in B_X(0,\gamma)$, and $\s\in\S$,
\begin{equation*}
\underline{D}_{\s}V(\phi(t,x,\s))\leq -\|\phi(t,x,\s)\|^{p},  
\end{equation*}
from which, by repeating the same reasoning as in~\eqref{Hagood}, we deduce the inequality 
\begin{equation}\label{dini6}
\int_{0}^{\infty}{\|\phi(\tau,x,\s)\|^{p}d\tau}\leq c\|x\|^{p}, \ \forall~x\in B_X(0,\gamma),\;\forall~\s\in \S.
\end{equation}
Thanks to Theorem~\ref{ULES}, the uniform local exponential stability of system $\Sigma$ follows from~\eqref{dini6}
together with~\eqref{exp-bounded-bis}.

The statement ii) of the theorem follows from~\eqref{Pettersen}, \eqref{dini6}, and the definition of $\gamma$. 
The last statement follows from the fact that~\eqref{dini6}
holds true for every $x\in X$ with $c$  
independent of $\|x\|$, using Theorem~\ref{datko}.

\subsection{Proof of Theorem~\ref{coercive1}}
Fix $r>0$ and let $M=M(r)$ and $\lambda=\lambda(r)$ be as in Definition~\ref{0-GES def}. 
Choose $\gamma=\gamma(r)>0$ such that $\gamma-\lambda<0$. 
Let $\bar t=\frac{\log(M)}{\lambda-\gamma}.$
Let $V_r:X\to \R_+$ be the functional defined  by 
\begin{equation}\label{lyapunov3}
V_r(x)=\frac1\gamma\sup_{\s\in \S, t\in [0,\bar t]}\|e^{\gamma t}\phi(t,x,\s)\|,\quad x\in X.
\end{equation}
The functional $V_r$ is well defined, since $\Sigma$ is USGES, which implies that, 
for every $R>0$, $t\ge 0$, and $x\in B_X(0,R)$,
\begin{equation}\label{eq:decr}
\|e^{\gamma t}\phi(t,x,\s)\|\le M(R)e^{(\gamma-\lambda(R))t}\|x\|.
\end{equation} 
Let us check inequalities~\eqref{norm comparison2}
and \eqref{dini9}. 
The right-hand inequality in~\eqref{norm comparison2} follows directly from \eqref{eq:decr}, with
\begin{equation}\label{eq:upb}
\overline{c}_r=\frac{M}{\gamma}.
\end{equation}
The left-hand inequality, with 
\begin{equation}\label{eq:lob}
\underline{c}_r=\frac{1}{\gamma},
\end{equation}
is a straightforward consequence of the definition of $V_r$. 
Concerning~\eqref{dini9}, remark that for all $x\in B_X(0,r)$ and $h\geq 0$,  we have 
\begin{align*}
\sup_{\s\in \S, t\in[\bar t,\bar t+h]}\|e^{\gamma t}\phi(t,x,\s)\|&\leq \sup_{t\in[\bar t,\bar t+h]}Me^{(\gamma-\lambda) t}\|x\|
\leq Me^{(\gamma-\lambda) \bar t}\|x\|= \|x\|,
\end{align*}
and hence
\begin{equation*}
\sup_{\s\in \S, t\in[0,\bar t+h]}\|e^{\gamma t}\phi(t,x,\s)\|=
\sup_{\s\in \S, t\in[0,\bar t]}\|e^{\gamma t}\phi(t,x,\s)\|. 
\end{equation*}
Then, for every $\tilde\s\in\S$, $x\in B_X(0,r)$, and $h\ge 0$, 
we have
\begin{align*}
 V_r(\phi(h,x,\tilde\s))&=\frac1\gamma\sup_{\s\in \S, t\in[0,\bar t]}\|e^{\gamma t}\phi(t,\phi(h,x,\tilde\s),\s)\|
\leq \frac1\gamma\sup_{\s\in \S, t\in[0,\bar t]}\|e^{\gamma t}\phi(t+h,x,\s)\|\nonumber\\
&= \frac{e^{-\gamma h}}{\gamma}\sup_{\s\in \S, t\in[0,\bar t]}\|e^{\gamma (t+h)}\phi(t+h,x,\s)\|
=\frac{e^{-\gamma h}}{\gamma}\sup_{\s\in \S, t\in[h,\bar t+h]}\|e^{\gamma t}\phi(t,x,\s)\|\nonumber\\
&\leq \frac{e^{-\gamma h}}{\gamma}\sup_{\s\in \S, t\in[0,\bar t+h]}\|e^{\gamma t}\phi(t,x,\s)\|=
 e^{-\gamma h} V_r(x). 
\end{align*}
Therefore, for all $x\in B_X(0,r)$ and any $\tilde\s\in\S$, it follows that 
\begin{align*}
\overline D_{\tilde\s}V_r(x)&=\limsup_{h\downarrow 0}\frac{V_r(\phi(h,x,\tilde\s))-V_r(x)}{h}
\leq \limsup_{h\downarrow 0}\frac{e^{-\gamma h}-1}{h}V_r(x)
\leq \limsup_{h\downarrow 0}\frac{e^{-\gamma h}-1}{h}\frac{\|x\|}{\gamma}\leq -\|x\|,
\end{align*}
which implies that inequality~\eqref{dini9} holds true. 

Let us prove that the functional $V_r:X\to\R_+$ is continuous. For 
$x,y\in X$, we have
\begin{align}
 |V_r(x)-V_r(y) |
= &\left|\sup_{\s\in \S, t\in [0,\bar t]}\|e^{\gamma t}\phi(t,x,\s)\|-\sup_{\s\in \S, t\in [0,\bar t]}\|e^{\gamma t}\phi(t,y,\s)\|\right|\nonumber\\
\leq&\left| \sup_{\s\in\S, t\in [0,\bar t]}\left(\|e^{\gamma t}\phi(t,x,\s)\|-\|e^{\gamma t}\phi(t,y,\s)\|\right)\right|\nonumber\\
\leq& \sup_{\s\in\S, t\in [0,\bar t]}\left|\|e^{\gamma t}\phi(t,x,\s)\|-\|e^{\gamma t}\phi(t,y,\s)\|\right|\nonumber\\
\leq& e^{\gamma \bar t}\sup_{\s\in\S, t\in[0,\bar t]}\left| \|\phi(t,x,\s)\|-\|\phi(t,y,\s)\|\right|.\label{V-continuity}
\end{align}
The continuity of $V$ then follows from 
the $\S$-uniform continuity of $\phi$.
Moreover, if the transition map $\phi$ is $\S$-uniformly Lipschitz continuous,
then we deduce from~\eqref{V-continuity}  that
\begin{eqnarray*}
\left |V_r(x)-V_r(y)\right |&\leq&e^{\gamma \bar t}l(\bar t)\|x-y\|,\label{V-Lipshitz}
\end{eqnarray*}
where $l(\bar t)$ is as in Definition~\ref{def:philip},
which implies 
the Lipschitz continuity of $V_r$. In the case where the transition map is Lipschitz continuous on bounded sets, 
we conclude similarly.

\begin{remark}
The construction of the Lyapunov functional~\eqref{lyapunov3} 
is based on the classical construction given in~\cite{Pazy} in the context of linear $C_0$-semigroups. This is also used in~\cite{7402277} in order to construct  a coercive input-to-state Lyapunov functional for bilinear infinite-dimensional systems with bounded input operators. An alternative construction of a coercive common Lyapunov functional can be given by 
 \begin{equation}\label{alternative1}
 V(x)=\sup_{\s\in \S}\int_{0}^{+\infty}\|\phi(t,x,\s)\|dt+\sup_{t\geq 0,\s\in\S}\|\phi(t,x,\s)\|, 
 \end{equation}
 or 
 also
 by 
  \begin{equation}\label{alternative2}
 V(x)=\int_{0}^{+\infty}\sup_{\s\in \S}\|\phi(t,x,\s)\|dt+\sup_{t\geq 0,\s\in\S}\|\phi(t,x,\s)\|.
 \end{equation}
For both constructions $V$ satisfies~\eqref{norm comparison2}: the right-hand inequality
follows directly from the uniform exponential stability assumption with ${\overline c=M\left(1+1/\lambda\right)}$,
and the left-hand one
holds with $\underline c=1$. Indeed, 
the second term appearing in~\eqref{alternative1} (respectively, \eqref{alternative2}) guarantees the coercivity of the functional $V$. The first term appearing in~\eqref{alternative1} (respectively, \eqref{alternative2}) is actually a (possibly non-coercive) Lyapunov functional which can be used to give a converse to Theorem~\ref{ULES-SG}. 
\end{remark}

\subsection{Proof of Theorem~\ref{sampling theorem}}
Fix $r>0$ and let 
$V_r:X\to \mathbb{R}_+$ be 
a
$L_r$-Lipschitz continuous Lyapunov functional satisfying the conclusions of 
Lemma~\ref{coercive3}.  
Since $V_r$ is decreasing along the trajectories of $\Sigma_0$ then, setting $\rho:=r \underline{c}_r/\overline{c}_r$, we have $\phi^{\Sigma_0}(t,x,\sigma)\in B_X(0,r)$ for every $x\in B_X(0,\rho)$, $t\ge 0$, and $\sigma\in {\cal S}={\rm PC}$.

%
%
%
Let $x_0\in B_X(0,\rho)$ and  $\s\in {\cal S}$. 
Let $t^*>0$ be such that the trajectory $x^\Sigma(\cdot)=\phi^{\Sigma}(\cdot,x_0,\sigma)$ of 
system $\Sigma$ stays in $B_X(0,r)$ for $t\in [0,t^*]$. 
Computing the upper Dini derivative of $V_r$ along 
$x^\Sigma(\cdot)$
 gives
\begin{align}
\overline{D}_{\s}V_r(x^{\Sigma}(t))&=\limsup_{h\downarrow 0}\frac{V_r\left(x^{\Sigma}(t+h)\right)-V_r\left(x^{\Sigma}(t)\right)}{h}\nonumber\\
                            &=\limsup_{h\downarrow 0}\Big[\frac{V_r\left(x^{\Sigma}(t+h)\right) 
                            -V_r\left(x^{\Sigma_0}(x^{\Sigma}(t),h)\right)}{h}
                            +\frac{V_r\left(x^{\Sigma_0}(x^{\Sigma}(t),h)\right)-V_r\left(x^{\Sigma}(t)\right)}{h}\Big]\nonumber\\
                            &\leq \limsup_{h\downarrow 0}\frac{V_r\left(x^{\Sigma}(t+h)\right)-V_r\left(x^{\Sigma_0}(x^{\Sigma}(t),h)\right)}{h}
                            +
                            \limsup_{h\downarrow 0}\frac{V_r\left(x^{\Sigma_0}(x^{\Sigma}(t),h)\right)-V_r\left(x^{\Sigma}(t)\right)}{h}\nonumber\\
                             &\leq L_r\limsup_{h\downarrow 0}\frac{\|x^{\Sigma}(t+h)-x^{\Sigma_0}(x^{\Sigma}(t),h)\|}{h}-\|x^{\Sigma}(t)\|.\label{derivative-sample}
                                \end{align}

Observe that 
for $k\geq 0$, $t\in [s_k,s_{k+1})\cap [0,t^*]$, and $h>0$ small enough we have 
\begin{align*}
x^{\Sigma}&(t+h)=T_{h}x^{\Sigma}(t)+
\ds\int_{0}^{h}{T_{h-s}f_{\s(t+s)}\left(x^{\Sigma}(t+s),K(T_{t+s-s_k}x^{\Sigma}(s_k))\right)}ds,
\end{align*}                               
and 
\begin{align*}
x^{\Sigma_0}&(x^{\Sigma}(t),h)=T_{h}x^{\Sigma}(t)+\ds\int_{0}^{h}{T_{h-s}f_{\s(t+s)}\left(x^{\Sigma_0}(x^{\Sigma}(t),s),K(x^{\Sigma_0}(x^{\Sigma}(t),s))\right)}ds.
\end{align*}   
Using the fact that $f$ and $K$ are, respectively, $L_f$- and $L_K$-Lipschitz continuous, 
one gets
\begin{align*}
\limsup_{h\downarrow 0}&\frac{\|x^{\Sigma}(t+h)-x^{\Sigma_0}(x^{\Sigma}(t),h)\|}{h}
\leq \Gamma L_fL_K\|x^{\Sigma}(t)-T_{t-s_k}x^{\Sigma}(s_k)\|.
\end{align*}
Going back 
to~\eqref{derivative-sample}, we obtain that 
                              \begin{align}
\overline{D}_{\s}V_r&(x^{\Sigma}(t))
\leq \Gamma L_r L_fL_K\|x^{\Sigma}(t)-T_{t-s_k}x^{\Sigma}(s_k)\|-\|x^{\Sigma}(t)\|.\label{dini-sampling}
                             \end{align}
We now need to estimate $$\varepsilon (t-s_k):=\|x^{\Sigma}(t)-T_{t-s_k}x^{\Sigma}(s_k)\|, \quad t\in [s_k, s_{k+1}).$$      
By adding and subtracting  
\[\int_{0}^{t-s_k}T_{t-s_k-s}f_{\s(s+s_k)}\left(T_{s}x^{\Sigma}(s_k),K\left(T_{s}x^{\Sigma}(s_k)\right)\right)ds\] in~\eqref{sample-mild}, and using the identity $f_q(0,0)=0$, 
we obtain
\begin{align*}
\varepsilon(t-s_k)
\leq&\Gamma e^{\omega\delta}L_{f}\int_{0}^{t-s_k}\varepsilon(s)ds+\Gamma e^{\omega\delta}\int_{0}^{t-s_k}\|f_{\s(s+s_k)}\left(T_{s}x^{\Sigma}(s_k),K\left(T_{s}x^{\Sigma}(s_k)\right)\right)\|ds\nonumber\\
\leq&\Gamma e^{\omega\delta}L_{f}\int_{0}^{t-s_k}\varepsilon(s)ds+\Gamma ^2e^{2\omega\delta}L_{f}(1+L_{K})\int_{0}^{t-s_k}\|x^{\Sigma}(s_k)\|ds\nonumber\\
\leq&\Gamma e^{\omega\delta}L_{f}\int_{0}^{t-s_k}\varepsilon(s)ds+\Gamma ^2e^{2\omega\delta}L_{f}(1+L_{K})\delta\|x^{\Sigma}(s_k)\|.
\end{align*}
By Gronwall's lemma, we have 
\begin{equation}\label{estim1}
\varepsilon(t-s_k)\leq c\delta\|x^{\Sigma}(s_k)\|,
\end{equation}
where $c=\Gamma ^2e^{2\omega\delta}L_{f}(1+L_{K})e^{L_{f}\delta \Gamma e^{\omega\delta}}$. 
Observe that, by~\eqref{Groupe}, 
\begin{equation}\label{eq:ya1}
\|x^{\Sigma}(s_k)\|\leq \Gamma e^{\omega\delta}\|T_{t-s_k}x^{\Sigma}(s_k)\|. 
\end{equation}
 Hence, from~\eqref{estim1},  \eqref{eq:ya1}, and the triangular inequality,  we get 
 \begin{align*} 
\varepsilon(t-s_k)\leq c\delta \Gamma e^{\omega\delta}\|T_{t-s_k}x^{\Sigma}(s_k)\|\leq c\delta \Gamma e^{\omega\delta}\Big(\varepsilon(t-s_k)+\|x^{\Sigma}(t)\|\Big),
\end{align*}
that is, for sufficiently small $\delta$, 
 \begin{equation*} 
\varepsilon(t-s_k)\leq \frac{c\delta \Gamma e^{\omega\delta}}{1-c\delta \Gamma e^{\omega\delta}}\|x^{\Sigma}(t)\|.
\end{equation*}
Let $\delta^{\star}>0$ be such that 
\begin{equation*}\label{cstar}
c^{\star}:=\Gamma L_r L_fL_K \frac{c\delta^{\star} \Gamma e^{\omega\delta^{\star}}}{1-c\delta^{\star} \Gamma e^{\omega\delta^{\star}}}<1.
\end{equation*}
 It follows from~\eqref{dini-sampling} that 
for every $\delta\in (0,\delta^{\star})$, we have 
  \begin{align*}
\overline{D}_{\s}V_r(x^{\Sigma}(t))\leq \Gamma L_r L_fL_K\varepsilon(t-t_k)-\|x^{\Sigma}(t)\|\leq (c^{\star}-1)\|x^{\Sigma}(t)\|\leq \frac{c^{\star}-1}{\overline{c}_r}V_r(x^{\Sigma}(t)).
\end{align*}
In particular, $V_r$ decreases exponentially along $x^{\Sigma}$, which implies that $t^*$ can be taken arbitrarily large
 and, since $V_r$ is coercive, we conclude that $\Sigma$ is UES in $B_{X}(0,\rho)$.

To conclude the proof, we need to verify that $\rho$ can be taken arbitrarily large (as $r\to \infty$) if \eqref{eq:limMr} holds true. In order to do so, notice that if $V_r$ is constructed as in the proof of Theorem~\ref{coercive1}, 
then $\overline{c}_r$ and $\underline{c}_r$ can be chosen as in \eqref{eq:upb} and \eqref{eq:lob}. 
In this case $\rho = r \underline{c}_r/\overline{c}_r = r/M(r)$,
concluding the proof of the theorem.

\subsection{Proof of Theorem~\ref{iss theorem}}
Applying Theorem~\ref{coercive1}, there exist a $L_V$-Lipschitz continuous functional $V: X\to \R_+$ and $\underline c,\overline c>0$ such that 
\begin{equation} \label{eq:coerv}
\underline c\|x\|\leq V(x)\leq \overline c\|x\|, \quad \forall~x\in X, 
\end{equation}
and 
\begin{equation}\label{lp2}
\limsup_{h\downarrow 0}\frac{V(\phi_0(h,x,\s))-V(x)}{h}\leq -\|x\|,
\end{equation}
for every $x\in X$ and $\s\in\S$.
Let $t\geq 0$, $\s\in \S$, and $u\in L^p(U)$, $1\leq p\leq +\infty$. Set $x=\phi_u(t,0,\s)$. For $h>0$ small enough, one has 
\begin{align}
\frac{V(\phi_u(h,x,\s))-V(x)}{h}
&=\frac{V(\phi_0(h,x,\s))-V(x)}{h} +\frac{V(\phi_u(h,x,\s))-V(\phi_0(h,x,\s))}{h}\nonumber\\
& \leq \frac{V(\phi_0(h,x,\s))-V(x)}{h}+L_{V}\frac{\|\phi_u(h,x,\s)-\phi_0(h,x,\s)\|}{h}\label{lp3}.
\end{align}
Define $e(s)=\phi_u(s,x,\s)-\phi_0(s,x,\s)$, for $s\in [0,h]$. By the variation of constant formula, one has
\begin{equation*}
e(s)=\int_{0}^{s} T_{s-\tau}\Upsilon(t,\tau,x,\s)
d\tau, \quad \forall~s\in [0,h],
\end{equation*}
where 
\begin{align*}
\Upsilon(t,\tau,x,\s)=f_{\s(\tau)}(\phi_0(\tau,x,\s)+e(\tau),u(t+\tau))-f_{\s(\tau)}(\phi_0(\tau,x,\s),0).
\end{align*}
Hence, one deduces that 
\begin{equation}\label{lp5}
\|e(h)\|\leq \Gamma L_{f}e^{\omega h}\int_{0}^{h}\left(\|e(\tau)\|+\|u(t+\tau)\|\right)d\tau. 
\end{equation}
For $p=+\infty$, one deduces that 
\begin{equation*}\label{linfini}
\limsup_{h\downarrow 0} \frac{\|e(h)\|}{h}\leq \Gamma L_{f}\|u\|_{L^{\infty}(U)}.
\end{equation*}
Letting $h\downarrow 0$ in~\eqref{lp3} and using \eqref{eq:coerv} 
one gets
\begin{align*}
\limsup_{h\downarrow 0}\frac{V(\phi_u(t+h,0,\s))-V(\phi_u(t,0,\s))}{h}&\leq 
-\|\phi_u(t,0,\s)\|+\Gamma L_fL_V\|u\|_{L^{\infty}(U)}\\
&\leq 
-\frac{1}{\overline c}V(\phi_u(t,0,\s))+c\|u\|_{L^{\infty}(U)},
\end{align*}
with $c=\Gamma L_fL_V$. Using~\cite[Theorem 9]{Hagood} for the function $t\mapsto e^{\frac{t}{\overline c}}V(\phi_u(t,0,\s))$, 
one 
concludes
that 
\[V(\phi_u(t,0,\s))\le c\bar c\|u\|_{L^{\infty}(U)},\quad t\ge 0.\]
The theorem follows for $p=+\infty$.  

We next assume that $1\leq p<+\infty$.
By a standard density argument, it is enough to prove~\eqref{lp1} for those $u\in L^p(U)$ which are, in addition, continuous, since $u\mapsto \phi_u(t,0,\s)$ is continuous in $L^p(U)$.
For $u$ continuous, we deduce, from~\eqref{lp5}, that 
\begin{equation}\label{lp4}
\limsup_{h\downarrow 0} \frac{\|e(h)\|}{h}\leq L_{f}\|u(t)\|.
\end{equation}
By using~\eqref{lp2}, \eqref{lp3}, and~\eqref{lp4} one gets 
\begin{align}
\limsup_{h\downarrow 0}&\frac{V(\phi_u(t+h,0,\s))-V(\phi_u(t,0,\s))}{h}\leq -\|\phi_u(t,0,\s)\|+c\|u(t)\|,\label{eq:pi}
\end{align}
with $c=L_f L_V$.
 Let 
$\varphi_p(r)=\frac{r^{p}}{p}$, $r\ge 0$, and  
 $W_p(x)=\varphi_p(x)$, $x\in X$. 
 Since $\varphi_p$ is increasing and differentiable, one deduce from \eqref{eq:pi} that 
 \begin{align*}
\limsup_{h\downarrow 0}&\frac{W_p(\phi_u(t+h,0,\s))-W_p(\phi_u(t,0,\s))}{h}
\leq -V^{p-1}(\phi_u(t,0,\s))\|\phi_u(t,0,\s)\|
+cV^{p-1}(\phi_u(t,0,\s))\|u(t)\|.
\end{align*}
 According to~\cite[Theorem 9]{Hagood}, it follows that
\begin{align*}
0\leq W_p(\phi_u(T,0,\s))
\leq -\int_0^{T}V^{p-1}(\phi_u(t,0,\s))\|\phi_{u}(t,0,\s)\|dt
+c\int_0^{T}V^{p-1}(\phi_u(t,0,\s))\|u(t)\|dt,
 \quad \forall~T>0. 
\end{align*}
By using \eqref{eq:coerv} one gets that 
\begin{align*}
{\underline c}^{p-1}\int_0^{T}&\|\phi_u(t,0,\s)\|^pdt
\leq c{\overline c}^{p-1}\int_0^{T}\|\phi_u(t,0,\s)\|^{p-1}\|u(t)\|dt, \quad \forall~T>0. 
\end{align*}
The theorem is proved for $p=1$, by letting $T\to+\infty$. 
For $p>1$, one first applies H\"older's inequality and then lets $T\to+\infty$ to get the conclusion.



\bibliography{biblio}

\begin{thebibliography}{10}

\bibitem{Ackerman}
D.~J\"urgen Ackermann.
\newblock {\em Sampled-Data Control Systems: Analysis and Synthesis, Robust
  System Design}, volume~1.
\newblock Springer-{V}erlag, {B}erlin-{N}ew {Y}ork, 1983.

\bibitem{5718046}
Vincent {Andrieu} and Madiha {Nadri}.
\newblock Observer design for {L}ipschitz systems with discrete-time
  measurements.
\newblock In {\em 49th IEEE Conference on Decision and Control (CDC)}, pages
  6522--6527, 2010.

\bibitem{bastin2016stability}
G.~Bastin and J.-M. Coron.
\newblock {\em Stability and boundary stabilization of 1-d hyperbolic systems},
  volume~88.
\newblock Springer, 2016.

\bibitem{Chitour19}
Yacine Chitour, Swann Marx, and Christophe Prieur.
\newblock ${L}^p$-asymptotic stability analysis of a 1{D} wave equation with a
  nonlinear damping.
\newblock Preprint Arxiv 1907.11712.

\bibitem{Datko}
Richard Datko.
\newblock Extending a theorem of {A}. {M}. {L}iapunov to {H}ilbert space.
\newblock {\em Journal of Mathematical Analysis and Applications}, 32:610--616,
  1970.

\bibitem{Frankowska}
Halina Frankowska.
\newblock A priori estimates for operational differential inclusions.
\newblock {\em Journal of Differential Equations}, 84(1):100--128, 1990.

\bibitem{Fridman2010}
Emilia Fridman.
\newblock A refined input delay approach to sampled-data control.
\newblock {\em Automatica}, 46(2):421--427, 2010.

\bibitem{Hagood}
John~W. Hagood and Brian~S. Thomson.
\newblock Recovering a function from a {D}ini derivative.
\newblock {\em American Mathematical Monthly}, 113(1):34--46, 2006.

\bibitem{Haidar-Automatica}
Ihab Haidar, Paolo Mason, and Mario Sigalotti.
\newblock Converse {L}yapunov--{K}rasovskii theorems for uncertain retarded
  differential equations.
\newblock {\em Automatica}, 62:263--273, 2015.

\bibitem{HaidarChapter2019}
Ihab Haidar, Paolo Mason, and Mario Sigalotti.
\newblock Stability of interconnected uncertain delay systems: a converse
  {L}yapunov approach.
\newblock In G.~Valmorbida, A.~Seuret, I.~Boussaada, and R.~Sipahi (eds),
  editors, {\em {Delays and Interconnections: Methodology, Algorithms and
  Applications}}, volume~10 of {\em Advances in {D}elays and {D}ynamics}.
  {Springer}, 2019.

\bibitem{Hale}
Jack~K. Hale and Sjoerd M.~Verduyn Lunel.
\newblock {\em {I}ntroduction to functional differential equations}, volume~99.
\newblock Springer-{V}erlag, 1993.

\bibitem{Hante2011}
Falk~M. Hante and Mario Sigalotti.
\newblock Converse {L}yapunov theorems for switched systems in {B}anach and
  {H}ilbert spaces.
\newblock {\em SIAM J. {C}ontrol {O}ptim.}, 49(2):752--770, 2011.

\bibitem{HETEL2017309}
Laurentiu Hetel, Christophe Fiter, Hassan Omran, Alexandre Seuret, Emilia
  Fridman, Jean-Pierre Richard, and Silviu~Iulian Niculescu.
\newblock Recent developments on the stability of systems with aperiodic
  sampling: An overview.
\newblock {\em Automatica}, 76:309--335, 2017.

\bibitem{Ichikawa}
Akira Ichikawa.
\newblock Equivalence of ${L}_p$ stability and exponential stability for a
  class of nonlinear semigroups.
\newblock {\em Nonlinear Analysis, Theory, Methods and Applications},
  8(7):805--815, 1984.

\bibitem{ISRAWI2019854}
Samer Israwi and Henrik Kalisch.
\newblock Approximate conservation laws in the {K}d{V} equation.
\newblock {\em Physics Letters A}, 383(9):854--858, 2019.

\bibitem{JMPW19}
Birgit Jacob, Andrii Mironchenko, Jonathan~R. Partington, and Fabian Wirth.
\newblock Non-coercive {L}yapunov functions for input-to-state stability of
  infinite-dimensional systems.
\newblock Preprint Arxiv 1911.01327.

\bibitem{8618712}
Birgit {Jacob}, Andrii {Mironchenko}, Jonathan~R. {Partington}, and Fabian
  {Wirth}.
\newblock Remarks on input-to-state stability and non-coercive {L}yapunov
  functions.
\newblock In {\em 57th IEEE Conference on Decision and Control (CDC)}, pages
  4803--4808, 2018.

\bibitem{Karafyllis2006}
Iasson Karafyllis.
\newblock Lyapunov theorems for systems described by {RFDE}s.
\newblock {\em Nonlinear {A}nalysis: {T}heory, {M}ethods $\&$ {A}pplications},
  64:590--617, 2006.

\bibitem{5208358}
Iasson {Karafyllis} and Costas {Kravaris}.
\newblock From continuous-time design to sampled-data design of observers.
\newblock {\em IEEE Transactions on Automatic Control}, 54(9):2169--2174, 2009.

\bibitem{Karafyllis2012}
Iasson Karafyllis and Miroslav Krstic.
\newblock Nonlinear stabilization under sampled and delayed measurements, and
  with inputs subject to delay and zero-order hold.
\newblock {\em IEEE Transactions on Automatic Control}, 57(5):1141--1154, 2012.

\bibitem{Karafyllis2018}
Iasson Karafyllis and Miroslav Krstic.
\newblock Sampled-data boundary feedback control of 1-{D} parabolic {PDE}s.
\newblock {\em Automatica}, 87:226--237, 2018.

\bibitem{Koga2019}
Shumon Koga, Iasson Karafyllis, and Miroslav Krstic.
\newblock Sampled-data control of the {S}tefan system.
\newblock Preprint Arxiv 1906.01434.

\bibitem{Logemann2003}
Hartmut Logemann, Richard Rebarber, and Stuart Townley.
\newblock Stability of infinite-dimensional sampled-data systems.
\newblock {\em Transactions of the American Mathematical Society}, pages
  3301--3328, 2003.

\bibitem{LUCIEN20172941}
Etienne Lucien, Hetel Laurentiu, Efimov Denis, and Petreczky Mihaly.
\newblock Observer analysis and synthesis for {L}ipschitz nonlinear systems
  under discrete time-varying measurements.
\newblock {\em IFAC-PapersOnLine}, 50(1):2941--2946, 2017.
\newblock 20th IFAC World Congress.

\bibitem{MANCILLAAGUILAR200147}
Jose~L. Mancilla-Aguilar and R.A. Garc\'ia.
\newblock On converse {L}yapunov theorems for {ISS} and i{ISS} switched
  nonlinear systems.
\newblock {\em Systems \& Control Letters}, 42(1):47--53, 2001.

\bibitem{Martinez2000}
Patrick Martinez.
\newblock Precise decay rate estimates for time-dependent dissipative systems.
\newblock {\em Israel Journal of Mathematics}, 119(1):291--324, 2000.

\bibitem{MCPA17}
Swann Marx, Cerpa Eduardo, Prieur Christophe, and Andrieu Vincent.
\newblock Global stabilization of a {K}orteweg--de {V}ries equation with
  saturating distributed control.
\newblock {\em SIAM Journal on Control and Optimization}, 55(3):1452--1480,
  2017.

\bibitem{Marx20}
Swann Marx and Guilherme Mazanti.
\newblock ${L}^p$-asymptotic stability analysis of a 1{D} wave equation with a
  boundary nonmonotone damping.
\newblock Preprint HAL-02463413.

\bibitem{MAM2015}
Fr\'ederic Mazenc, Vincent Andrieu, and Michael Malisoff.
\newblock Continuous-discrete observers for time-varying nonlinear systems: A
  tutorial on recent results.
\newblock In {\em Proceedings of the SIAM Conference on Control and its
  Applications}, pages 181--188, 2015.

\bibitem{MAZENC201674}
Fr\'ed\'eric Mazenc and Emilia Fridman.
\newblock Predictor-based sampled-data exponential stabilization through
  continuous--discrete observers.
\newblock {\em Automatica}, 63:74--81, 2016.

\bibitem{Menzala2002}
G.~Perla Menzala, C.~F. Vasconcellos, and Enrique Zuazua.
\newblock Stabilization of the {K}orteweg-de {V}ries equation with localized
  damping.
\newblock {\em Quarterly of Applied Mathematics}, 60:111--129, 2002.

\bibitem{MiroPrieur2019}
Andrii Mironchenko and Christophe Prieur.
\newblock Input-to-state stability of infinite-dimensional systems: Recent
  results and open questions.
\newblock Preprint Arxiv 1910.01714.

\bibitem{7402277}
Andrii {Mironchenko} and Fabian {Wirth}.
\newblock A note on input-to-state stability of linear and bilinear
  infinite-dimensional systems.
\newblock In {\em 54th IEEE Conference on Decision and Control (CDC)}, pages
  495--500, 2015.

\bibitem{MIRONCHENKO201864}
Andrii Mironchenko and Fabian Wirth.
\newblock {L}yapunov characterization of input-to-state stability for
  semilinear control systems over {B}anach spaces.
\newblock {\em Systems \& Control Letters}, 119:64--70, 2018.

\bibitem{Mironchenko2019}
Andrii Mironchenko and Fabian Wirth.
\newblock Existence of non-coercive {L}yapunov functions is equivalent to
  integral uniform global asymptotic stability.
\newblock {\em Mathematics of Control, Signals, and Systems}, 31(2):1--26,
  2019.

\bibitem{MIRONCHENKO2018}
Andrii Mironchenko and Fabian Wirth.
\newblock Non-coercive {L}yapunov functions for infinite-dimensional systems.
\newblock {\em J. Differential Equations}, 266(11):7038--7072, 2019.

\bibitem{Pazy1972}
Amnon Pazy.
\newblock On the applicability of {L}yapunov's theorem in {H}ilbert space.
\newblock {\em SIAM Journal on Mathematical Analysis}, 3(2):291--294, 1972.

\bibitem{Pazy}
Amnon Pazy.
\newblock {\em Semigroups of Linear Operators and Applications to Partial
  Differential Equations}.
\newblock Springer-{V}erlag, {N}ew {Y}ork, 1983.

\bibitem{PEPE2017295}
Pierdomenico Pepe and Emilia Fridman.
\newblock On global exponential stability preservation under sampling for
  globally {L}ipschitz time-delay systems.
\newblock {\em Automatica}, 82:295--300, 2017.

\bibitem{PEPE20061006}
Pierdomenico Pepe and Zhong~P. Jiang.
\newblock A {L}yapunov-{K}rasovskii methodology for {ISS} and i{ISS} of
  time-delay systems.
\newblock {\em Systems \& Control Letters}, 55(12):1006--1014, 2006.

\bibitem{Pepe-IJC2013}
Pierdomenico Pepe and Iasson Karafyllis.
\newblock Converse {L}yapunov--{K}rasovskii theorems for systems described by
  neutral functional differential equations in {H}ale's form.
\newblock {\em International Journal of Control}, 86(2):232--243, 2013.

\bibitem{PEPE2017}
Pierdomenico Pepe, Iasson Karafyllis, and Zhong~P. Jiang.
\newblock Lyapunov--{K}rasovskii characterization of the input-to-state
  stability for neutral systems in {H}ale's form.
\newblock {\em Systems $\&$ {C}ontrol {L}etters}, 102:48--56, 2017.

\bibitem{Rosier2009}
Lionel Rosier and Bing-Yu Zhang.
\newblock Control and stabilization of the {K}orteweg-de {V}ries equation:
  recent progresses.
\newblock {\em Journal of Systems Science and Complexity}, 22(4):647--682,
  2009.

\bibitem{Sontag}
Eduardo~D. Sontag.
\newblock Comments on integral variants of {ISS}.
\newblock {\em Systems $\&$ Control Letters}, 34(1):93--100, 1998.

\bibitem{SONTAG1995351}
Eduardo~D. Sontag and Yuan Wang.
\newblock On characterizations of the input-to-state stability property.
\newblock {\em Systems \& Control Letters}, 24(5):351--359, 1995.

\bibitem{701099}
Andrew~R. {Teel}.
\newblock Connections between {R}azumikhin-type theorems and the {ISS}
  nonlinear small gain theorem.
\newblock {\em IEEE Transactions on Automatic Control}, 43(7):960--964, 1998.

\bibitem{Wakaiki2019}
Masashi Wakaiki and Yutaka Yamamoto.
\newblock Stability analysis of perturbed infinite-dimensional sampled-data
  systems.
\newblock Preprint Arxiv 1910.01429.

\bibitem{WEBB1974}
Glenn~F. Webb.
\newblock Autonomous nonlinear functional differential equations and nonlinear
  semigroups.
\newblock {\em Journal of Mathematical Analysis and Applications}, 46(1):1--12,
  1974.

\bibitem{Yeganefar}
Nima Yeganefar, Pierdomenico Pepe, and Michel Dambrine.
\newblock Input-to-state stability of time-delay systems: a link with
  exponential stability.
\newblock {\em {IEEE} {T}ransactions on {A}utomatic {C}ontrol},
  53(6):1526--1531, 2008.

\end{thebibliography}

\end{document}